\DeclareFontFamily{OT1}{pzc}{}
\DeclareFontShape{OT1}{pzc}{m}{it}{<-> s * [1.10] pzcmi7t}{}
\DeclareMathAlphabet{\mathpzc}{OT1}{pzc}{m}{it}
\definecolor{Red}{cmyk}{0,1,1,0.2}
\newcommand{\R}{\mathbb R}
\def\R{\mathbb R}
\def\ep{\epsilon}
\newcommand{\be}{\begin{equation}}
\newcommand{\ee}{\end{equation}}
\def\1{{\bf 1}}
\newtheorem{Theorem}{Theorem}[section]
\newtheorem{Lemma}[Theorem]{Lemma}
\begin{document}


\title[A note on contractive semi-groups on a 1:1 junction]{A note on contractive semi-groups on a 1:1 junction for  scalar conservation laws and Hamilton-Jacobi equations}
\author{P. Cardaliaguet}
\address{Universit\'e Paris-Dauphine, PSL Research University, Ceremade, 
Place du Mar\'echal de Lattre de Tassigny, 75775 Paris cedex 16 - France}
\email{cardaliaguet@ceremade.dauphine.fr }

\maketitle

\begin{abstract} We show that any continuous semi-group on $L^1$ which is (i) $L^1-$contractive, (ii) satisfies the conservation law $\partial_t \rho+\partial_x(H(x,\rho))=0$ in $\R_+\times (\R\backslash\{0\})$ (for a space discontinuous flux $H(x,p)= H^l(p) {\bf 1}_{x<0}+ H^r(p) {\bf 1}_{x>0}$), and  (iii) satisfies natural continuity and scaling properties, is necessarily given by a germ condition at the junction: $\rho(t,0)\in  \mathcal G$ a.e., where $\mathcal G$ is a maximal, $L^1-$dissipative and complete germ. In a symmetric way, we prove that any continuous semi-group on $L^\infty$ which is (i) $L^\infty-$contractive, (ii) satisfies with the Hamilton-Jacobi equation $\partial_t u+H(x,\partial_x u)=0$ in $\R_+\times (\R\backslash\{0\})$ (for a space discontinuous Hamiltonian $H$ as above), and  (iii) satisfies natural continuity and scaling properties, is necessarily given by a flux limited solution of the Hamilton-Jacobi equation. 
\end{abstract}



\section{Introduction}

The aim of this note is to identify the conservation law (CL) semi-groups and the Hamilton-Jacobi (HJ) semi-groups which are $L^1-$  (for the CL) or $L^\infty-$  (for HJ) contracting, in the context of a 1:1 junction. Namely we consider a discontinuous flux (or Hamiltonian) $H:\R\times \R\to \R$ of the form
$$
H(x,p)= H^l(p){\bf 1}_{\{x<0\}}+ H^r(p){\bf 1}_{\{x>0\}}
$$
and look at the CL 
$$
\partial_t \rho+ \partial_x (H(x,\rho))= 0 \qquad \text{in}\; \R_+\times (\R\backslash \{0\})
$$
or the HJ equation
$$
\partial_t u+ H(x,\partial_x u)= 0 \qquad \text{in}\; \R_+\times (\R\backslash \{0\}).
$$
The problem is to find  conditions that have to be put at the junction $x=0$ to ensure that the equation is well-posed and $L^1-$contractive (for the CL) or $L^\infty-$contractive (for the HJ). Many papers have built such semi-groups for the CL: see, for instance,  \cite{AGDV11, AKR11, AuPe05, BaJe97, BKT09, GNPT07, To00, To01}. The main contribution of   \cite{AKR11} is to identify a general class of such semi-groups,  expressed through the notion of $\mathcal G-$entropy solutions: the additional condition takes the form  $(\rho(t,0^-), \rho(t,0^+))\in  \mathcal G$ for a.e. $t\geq0$, where the set $\mathcal G\subset \R^2$ is a maximal, $L^1-$dissipative and complete germ (see \cite{AKR11}). A somewhat symmetric construction is done in \cite{IM17} for the HJ equation, introducing the notion of flux limited solutions: the additional condition is now takes  the form:
$$
\partial_t u + \min \{\bar A, (H^l)^+(\partial_x^- u), (H^r)^-(\partial_x^+u)\}= 0 \qquad  \text{in} \; (0, \infty)\times \{0\}, 
$$
for a suitable flux limiter $\bar A$ (see below for the notations). The aim of this short note is to show that these semi-groups---described through germs (for CL) or flux limiters (for HJ)---are actually the only contractive ones. 

Our main results  (Theorem \ref{thm.main1} for the HJ equation and Theorem \ref{thm.main2} for the conservation law) state that, if a continuous semi-group is $L^1-$  or $L^\infty-$ contractive, coincides locally with the CL or HJ semi-group outside  $x=0$ and satisfies natural continuity and scaling properties, then it is necessarily given by a germ (for the CL) or a flux limiter (for the HJ). The choice to present at the same time the case of CL and of HJ is related to the fact that there is a strong relationship between both semi-groups: see \cite{CFGM} (for convex Hamiltonians) and  \cite{FIM24} (for coercive ones). We make a use of this property in the proofs. 

These results are somewhat expected: indeed, the introduction of \cite{AKR11} suggests that the notion of $\mathcal G-$entropy solutions was introduced to describe all the $L^1-$dissipative semi-groups for conservation law on an 1:1 junction. On the other hand, \cite{FIM24, IM17} show that, for the HJ equation, general conditions of the form $\partial_tu +F(\partial_x u)=0$  at the junction can be expressed in terms of flux limiter conditions. The main interest of this note is to actually {\it prove} the fact that natural contraction properties are enough to identify the semi-groups. To fix the ideas we work in the framework of traffic flow, i.e., assume that the fluxes/Hamiltonians $H^{l/r}$ are strictly concave. The results can probably be extended to more general fluxes or Hamiltonians, but to the expense of more technical proofs. 

Let us finally say a word about more complex junctions, i.e., junctions with more than one incoming branch or more than one outgoing one. The situation is then much more delicate: approaches by HJ and by CL are no longer equivalent (see for instance a counterexample in \cite{CFGM}), and the class of conditions that have to be put at the junction is much richer and---yet---far from understood.

\section{Identification of the HJ equations}

We fix two smooth, Lipschitz and uniformly concave Hamiltonians $H^{l}: [0,R^l]\to \R$ and $H^r: [0,R^r]\to \R$, with $H^l(R^l)=H^r(R^{r})=H^l(0)=H^r(0)=0$ (for some $R^l, R^r>0$) and  set
$$
H(x,p)= H^l(p){\bf 1}_{\{x<0\}}+ H^r(p){\bf 1}_{\{x>0\}}.
$$
We define $Lip$ as the set of Lipschitz maps $u:\R\to \R$ with $u'\in [0,R^l]$ a.e. in $(-\infty, 0)$ and $u'\in [0,R^r]$ a.e. in $(0,\infty)$. The set $Lip$  is endowed with the topology of  local uniform convergence.  We denote by $\bar S^{HJ}$ the semi-group associated with the Hamilton-Jacobi equation on the line: given $u_0\in Lip$, $u(t,x):= \bar S^{HJ}(t,u_0)(x)$ is the {\it minimal} solution to 
$$
 \left\{\begin{array}{l}
\partial_t u +H(x,\partial_xu) = 0 \qquad \text{in}\; (0,\infty)\times \R\\
u(0,x)= u_0(x)  \qquad \text{in}\; \R
\end{array}\right. 
$$
Then $u(t,\cdot)\in Lip$ for any $t\geq0$ and $\bar S^{HJ}$ is a continuous semi-group. Note that, if $H^l=H^r$, then $\bar S^{HJ}$ the standard semi-group associated with the Hamilton-Jacobi equation on the line. Here we need to specify that it is the minimal solution as the Hamiltonian is discontinuous.  

Our aim is to identify any semi-group $S^{HJ}$ satisfying the following conditions: 
\begin{align*}
(H1) & \qquad \text{$S^{HJ}:[0,\infty)\times Lip\to Lip$ is a continuous homogeneous semi-group,} \\
(H2) &\qquad   \text{$S^{HJ}$ is $L^\infty-$contractive: if $u_0^1, u_0^2\in Lip\cap L^\infty(\R), \;  \|S^{HJ}(t,u^0_1)-S^{HJ}(t,u_0^2)\|_\infty \leq \|u_0^1-u_0^2\|_\infty$,}\\
(H3) &\qquad \text{$S^{HJ}$ permutes with constants: $S^{HJ}(t, u_0+c)= S^{HJ}(t,u_0)+c$ for any  $c\in \R$ and  $u_0\in Lip$,} \\
(H4) &\qquad \text{$S^{HJ}$ has a finite propagation property: there exists $C_0>0$ such that,}\\
& \qquad \text{ if $u_0^1= u_0^2$ on $[a,b]$, then $S^{HJ}(t,u^0_1)= S^{HJ}(t,u_0^2)$ on $[a+C_0t, b-C_0t]$ for $t\in[0, C_0^{-1}(b-a)]$, }\\
(H5) &\qquad \text{$S^{HJ}$ coincides locally in $\R_+\times \R\backslash \{0\}$ with $\bar S^{HJ}$: for any $u_0\in Lip$ and $x\neq 0$, } \\
& \qquad \qquad \text{ $S^{HJ}(t,u_0)(y)=\bar S^{HJ}(t,u_0)(y)$ for any $t\in (0, C_0^{-1}|x|)$ and $y\in \R$ with $|y-x|<|x|-C_0t$,}\\ 
(H6) &\qquad \text{$S^{HJ}$ is scale invariant: for any $\ep,t >0$ and $u_0\in Lip$, $\ep S^{HJ}(t/\ep, \ep u_0(\cdot/\ep))(x/\ep)= S^{HJ}(t,u_0)(x)$.}
\end{align*}

\begin{Theorem}\label{thm.main1} Under the conditions above, there exists $\bar A\in [0,A_{\max}]$ such that $S^{HJ}=S^{HJ,\bar A}$, where $A_{\max}=\min\{\max H^l, \max H^r\}$ and  $S^{HJ,\bar A}$ is the HJ semi-group with flux-limiter $\bar A$, i.e., for a given initial condition $u_0\in Lip$, the viscosity solution to 
\be\label{eq.HJfluxlimitedCC}
 \left\{\begin{array}{l}
\partial_t u +H(x,\partial_xu) = 0 \qquad \text{in}\; (0,\infty)\times (\R\backslash\{0\})\\
\partial_t u + \min \{\bar A, (H^l)^+(\partial_x^- u), (H^r)^-(\partial_x^+u)\}= 0 \qquad  \text{in} \; (0, \infty)\times \{0\}\\
u(0,x)= u_0(x)  \qquad \text{in}\; \R
\end{array}\right. 
\ee
\end{Theorem}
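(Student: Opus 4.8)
The plan is to reduce the statement to the Riemann problem at the junction, to record the action of $S^{HJ}$ near $x=0$ in a single ``junction flux'' function of the two one‑sided slopes, and then to show that $L^\infty$–contraction forces that function to be a flux limiter. First, for $p^-\in[0,R^l]$ and $p^+\in[0,R^r]$ I introduce the ``corner'' datum $u_{p^-,p^+}\in Lip$ equal to $p^-x$ for $x\le0$ and to $p^+x$ for $x>0$. Since $\epsilon\,u_{p^-,p^+}(\cdot/\epsilon)=u_{p^-,p^+}$, the scaling invariance (H6) with $\epsilon=t$ gives $S^{HJ}(t,u_{p^-,p^+})(x)=t\,\phi_{p^-,p^+}(x/t)$, where $\phi_{p^-,p^+}:=S^{HJ}(1,u_{p^-,p^+})\in Lip$; I then set $A(p^-,p^+):=-\phi_{p^-,p^+}(0)$.

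Next I would establish the basic properties of $A$. By (H5), away from $x=0$ the semi‑group $S^{HJ}$ coincides with $\bar S^{HJ}$, so $\phi_{p^-,p^+}$ restricted to $\{\xi<0\}$ (resp.\ $\{\xi>0\}$) is the classical self‑similar solution of the Riemann problem for $H^l$ (resp.\ $H^r$) carrying slope $p^-$ at $-\infty$ (resp.\ $p^+$ at $+\infty$) and value $-A(p^-,p^+)$ at $0$. Letting $x\to0^\mp$ in the equations gives $H^l(\phi_{p^-,p^+}'(0^-))=H^r(\phi_{p^-,p^+}'(0^+))=A(p^-,p^+)$ with $\phi_{p^-,p^+}'(0^-)\in[0,R^l]$ and $\phi_{p^-,p^+}'(0^+)\in[0,R^r]$; since $H^l,H^r\ge0$ this already forces $0\le A(p^-,p^+)\le A_{\max}$, and admissibility of the two one‑sided profiles (the classical boundary‑flux constraint for strictly concave fluxes) gives $A(p^-,p^+)\le\min\{(H^l)^+(p^-),(H^r)^-(p^+)\}$. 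The finite‑propagation property (H4) together with (H3) shows that, if $u_0\in Lip$ agrees with $u_0(0)+u_{p^-,p^+}$ on some $[-\delta,\delta]$, then $S^{HJ}(t,u_0)(0)=u_0(0)-t\,A(p^-,p^+)$ for $t<\delta/C_0$; thus for short times the value at the junction depends only on the one‑sided slopes at $0$. Finally, testing (H2) on a bounded, constant‑shifted pair of corners that coincide for $x\ge0$ yields $|A(p_1^-,p^+)-A(p_2^-,p^+)|\le C_0|p_1^--p_2^-|$ and its mirror in $p^+$, so $A$ is Lipschitz.

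The heart of the proof, and the step I expect to be the main obstacle, is to show that $A$ is a flux limiter: there is $\bar A\in[0,A_{\max}]$ with $A(p^-,p^+)=\min\{\bar A,(H^l)^+(p^-),(H^r)^-(p^+)\}$. Given the upper bound above and $\bar A:=\sup A\le A_{\max}$, what is missing is the ``saturation'' of the flux, namely that $A$ attains $\bar A$ whenever $\min\{(H^l)^+(p^-),(H^r)^-(p^+)\}\ge\bar A$, and equals $\min\{(H^l)^+(p^-),(H^r)^-(p^+)\}$ otherwise. I would obtain this through the conservation‑law/Hamilton--Jacobi correspondence of \cite{CFGM,FIM24}: the family $\{\phi_{p^-,p^+}\}$ defines a germ $\mathcal G\subset\R^2$ which is $L^1$–dissipative by (H2) and, since $S^{HJ}$ is globally defined on $Lip$ and satisfies the locality of the previous step, is maximal and complete; by the classification of such germs on a 1:1 junction with strictly concave fluxes \cite{AKR11}, $\mathcal G$ is exactly the germ of a flux limiter, which gives the formula. (A more self‑contained route would combine the semi‑group identity — i.e.\ the ``retraction'' relation $A(\phi_{p^-,p^+}'(0^-),\phi_{p^-,p^+}'(0^+))=A(p^-,p^+)$ — with the Lipschitz bound and with (H2) to force first monotonicity of $A$ in $(H^l)^+(p^-)$ and $(H^r)^-(p^+)$ and then maximality; turning $L^\infty$–contraction into this rigidity is exactly where strict concavity is genuinely used and where the real work lies.)

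It then remains to identify $S^{HJ}$ with $S^{HJ,\bar A}$. The flux‑limited semi‑group $S^{HJ,\bar A}$ solves $\partial_tu+H(x,\partial_xu)=0$ in $\R_+\times(\R\setminus\{0\})$ with finite propagation, hence coincides with $\bar S^{HJ}$ — and so with $S^{HJ}$ — off $x=0$. To check that $u:=S^{HJ}(\cdot,u_0)$ solves the junction condition of \eqref{eq.HJfluxlimitedCC} in the viscosity sense I would use a blow‑up argument: for $t_0>0$ the semiconcavity of the HJ flow provides one‑sided slopes $p^\pm:=\partial_x^\pm u(t_0,0)$, and the rescalings $(t,x)\mapsto\lambda^{-1}(u(t_0+\lambda t,\lambda x)-u(t_0,0))$ converge, by (H6) and the locality step, to the corner solution $S^{HJ}(\cdot,u_{p^-,p^+})$, which by the previous paragraph is a flux‑limited solution with junction Hamiltonian $\min\{\bar A,(H^l)^+(p^-),(H^r)^-(p^+)\}$; stability of viscosity sub‑ and super‑solutions then transfers the junction inequalities to $u$ at $(t_0,0)$. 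Since $u$ also satisfies the initial datum and the equation off $x=0$, the comparison principle for flux‑limited solutions \cite{IM17} yields $S^{HJ}(t,u_0)=S^{HJ,\bar A}(t,u_0)$ for all $u_0\in Lip$ and $t\ge0$.
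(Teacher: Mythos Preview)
Your plan differs substantially from the paper's and has a genuine gap at the step you yourself flag as the ``heart of the proof''.

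\medskip

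\textbf{The gap.} You define the junction flux $A(p^-,p^+)$ for all corner data and then want to show it equals $\min\{\bar A,(H^l)^+(p^-),(H^r)^-(p^+)\}$. For this you invoke the $L^1$--germ classification of \cite{AKR11} via the HJ/CL correspondence of \cite{CFGM,FIM24}. But to apply that machinery you must first prove that the set of trace pairs $\{(\phi'_{p^-,p^+}(0^-),\phi'_{p^-,p^+}(0^+))\}$ is a \emph{maximal, complete, $L^1$--dissipative} germ. None of this is a formal consequence of (H2): $L^\infty$--contraction of $S^{HJ}$ on $Lip$ does not translate mechanically into $L^1$--dissipativity of a germ, and ``$S^{HJ}$ is globally defined'' is not, by itself, maximality or completeness in the sense of \cite{AKR11}. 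Your parenthetical ``self-contained route'' (retraction identity plus monotonicity) is the place where an actual argument is needed, and you acknowledge it is left undone. As written, the crucial classification step is asserted rather than proved. The final blow-up step is also sketchy: you assume one-sided slopes $\partial_x^\pm u(t_0,0)$ exist and that the rescalings converge to a corner solution, but neither follows directly from (H1)--(H6) without further work (semiconcavity in $x$ across the junction is not automatic).

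\medskip

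\textbf{How the paper avoids this.} The paper never attempts to compute $A(p^-,p^+)$ for general pairs. It singles out two one-parameter families of corners, $\hat\phi_A(x)=p^{l,+}_A x\,{\bf 1}_{x\le0}+p^{r,-}_A x\,{\bf 1}_{x\ge0}$ and $\check\phi_A(x)=p^{l,-}_A x\,{\bf 1}_{x\le0}+p^{r,+}_A x\,{\bf 1}_{x\ge0}$, defines $\bar A:=-S(1,\hat\phi_0)(0)$ directly, and shows by explicit computation that $S(t,\hat\phi_0)=S^{\bar A}(t,\hat\phi_0)$ and $S(t,\hat\phi_{\bar A})=\hat\phi_{\bar A}-t\bar A$. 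The supersolution property at $x=0$ then follows from the universal inequality $u(t,\cdot)\ge u(t,0)+\hat\phi_0$ valid for any $u(t,\cdot)\in Lip$. For the subsolution property the paper invokes \cite[Theorem~2.7]{IM17} to reduce the junction test to test functions with $H^l(\partial_x^-\varphi(0))=H^r(\partial_x^+\varphi(0))=\bar A$, and then compares with $\check\phi_A$ for $A<\bar A$; the key computation is $S(t,\check\phi_A)(0)=-At$, proved by a short contradiction argument using comparison with $\hat\phi_{\bar A}$. No germ classification and no blow-up are needed.

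\medskip

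In short: your framework is natural, but the step that turns $L^\infty$--contraction into the rigidity $A=\min\{\bar A,(H^l)^+,(H^r)^-\}$ is precisely what has to be \emph{proved}, and the paper's proof shows how to bypass it by working only with the extremal corners $\hat\phi_A,\check\phi_A$ and by leaning on the test-function reduction of \cite{IM17}.
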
 

Above $(H^{l/r})^+$ (respectively $(H^{l/r})^-$) denotes the smallest nondecreasing (resp. nonincreasing) map above $H^{l/r}$. The notion of flux-limited viscosity solution of \eqref{eq.HJfluxlimitedCC} is defined as usual, except that the test functions $\phi=\phi(t,x)$, which are continuous in $\R_+\times \R$ and $C^1$ in $\R_+\times (\R\backslash \{0\})$, have only a left- and right- space derivative at $x=0$, denoted respectively by $\partial_x^- \phi(t,0)$ and $ \partial_x^+ \phi(t,0)$. This notion of viscosity flux limited solution for \eqref{eq.HJfluxlimitedCC} was introduced in \cite{IM17}, which also established that,  for any $A\in [0, A_{\max}]$, there exists a unique viscosity solution to \eqref{eq.HJfluxlimitedCC}. Note  that $\bar S^{HJ}= S^{HJ, A_{\max}}$. 
Recall finally that, by \cite{CT80}, under (H3), (H2) is equivalent to a comparison principle: $u_0^1\leq u_0^2\; \Rightarrow \; S^{HJ}(t,u^0_1)\leq S^{HJ}(t,u_0^2)$. 

\begin{proof} To simplify notation, we  set throughout the proof $S=S^{HJ}$, $\bar S=\bar S^{HJ}$ and $S^A=S^{HJ,A}$. Let us start with preliminary remarks. For $u_0\in Lip$, the map $(t,x)\to S(t,u_0)(x)$ is continuous and a solution of the HJ equation in $(0,\infty)\times (\R\backslash \{0\})$: by \cite[Theorem 2.7]{IM17} it is therefore a supersolution to the HJ equation on the whole space $(0,\infty)\times \R$. We infer by comparison that $S(t,u_0)\geq \bar S(t,u_0)$. 

Given $A\in [0, A_{\max}]$, let $p^{r/l,-}_A$ and $p^{r/l,+}_A$ be respectively the smallest and the largest solutions to $H^{r/l}(p)=A$. We define the maps 
$$
\check \phi_A(x)= p^{l,-}_A x {\bf 1}_{x\leq 0}+ p^{r,+}_A x {\bf 1}_{x\geq 0}, \qquad \hat \phi_A(x)= p^{l,+}_A x {\bf 1}_{x\leq 0}+ p^{r,-}_A x {\bf 1}_{x\geq 0} . 
$$
We  note that 
$$
S^A(t, \hat \phi_A) = \hat \phi_A- tA. 
$$
Indeed, the map $w(t,x):= \hat \phi_A(x)- tA$ is a classical solution to the HJ equation in $(0,\infty)\times (\R\backslash\{0\})$ and satisfies the initial condition. As for the condition at the vertex, it holds in a classical sense since 
$$
\partial_t w(t,0) + \min \{ A, H^{l,+}(\partial_x^- w(t,0)), H^{r,-}(\partial_x^+w(t,0))\}
= -A + \min \{ A, H^{l,+}(p^{l,+}_A), H^{r,-}(p^{r,-}_A)\} = 0.
$$
The idea of the proof is to first identify $S(t,\cdot)$ for the particular functions $\check \phi_A$ and $\hat \phi_A$, and then to use the $\check \phi_A$ and $\hat \phi_A$ as test functions. We do this through several steps: we first compute $S^A(t,\hat \phi_0)$ (Step 1), then define $\bar A$ and show, thanks to the first step, that $S(t,  \hat \phi_{0})= S^{\bar A}(t,\hat \phi_0)$ (Step 2): as $u\geq \hat \phi_0+ u(0)$ for any $u\in Lip$, this will give the supersolution property. To prove the subsolution property, we show that $S(t, \check \phi_A) = S^{\bar A}(t, \check \phi_A)$ (Step~3). We conclude in Step~4 by the proof of the equality $S=S^{\bar A}$. \\

\noindent {\it Step 1.} 
We first claim that, for any $A\in [0,A_{\max}]$, 
\be\label{ilkjzerdsCC}
S^A(t,\hat \phi_0)(x)= \left\{\begin{array}{ll}
\hat \phi_A(x)-tA & \text{if}\;  x\in [t(H^l)'(p^{l,+}_A), t(H^r)'(p^{r,-}_A)],\\
\bar S(t, \hat \phi_0)(x) & \text{otherwise}
\end{array}\right.
\ee
Moreover $u(t,x):=S^A(t,\hat \phi_0)(x)$ is the unique viscosity solution to the boundary value problem: 
\be\label{eq.boundarypbCC}
\left\{\begin{array}{l}
\partial_t u +H(x,\partial_xu) = 0 \qquad \text{in}\; (0,\infty)\times (\R\backslash\{0\}),\\
u(t,0) = - t A \qquad  \text{in} \; (0, \infty)\times \{0\},\\
u(0,x)= \hat \phi_0(x)  \qquad \text{in}\; \R.
\end{array}\right. 
\ee

\noindent {\it Proof of the claims:} Let us first compute $\bar S(t,\hat \phi_0)(x)$. We claim that 
$$
\bar S(t,\hat \phi_0)(x) = \left\{\begin{array}{ll}
0 & \text{if}  \; x\geq t (H^r)'(0),\\ 
- t (H^l_{A_{\max}})^*(x/t){\bf 1}_{x\leq 0} -t  (H^r_{A_{\max}})^*(x/t){\bf 1}_{x\geq 0} &  \text{if}   \;  x\in [ t (H^l)'(R^l), t (H^r)'(0)],\\ 
R^lx  &  \text{if}   \;  x\leq t (H^l)'(R^l),
\end{array}\right. 
$$
 where $(H^{l/p}_{A_{\max}})^*(p)= \sup_{y\in [0,R]} -py +\min\{H^{l/p}(y), A_{\max}\}$. Indeed, 
 let us  denote by $w(t,x)$ the function in the right-hand side. Then $w$ is continuous at $(t,0)$ because 
 $$
 w(t,0^\pm)= -t(H^{l/p}_{A_{\max}})^*(0) = -t\sup_{y\in [0,R]} \min\{H^{l/p}(y), A_{\max}\}= -tA_{\max}. 
 $$
 Moreover, $w$ is continuous at $(t, x)$ for $x:=t (H^r)'(0)>0$ since 
 $$
 w(t,x^-)=- t(H^r_{A_{\max}})^*(x/t)= - t\sup_{y\in [-R,0]} -(H^r)'(0)y+\min\{H^r(y), A_{\max}\}= tH^r(0)=0,
 $$ 
because the maximum in the second formula is reached at $y=0$ by the optimality condition. In the same way, for $x=t(H^l)'(R)<0$, 
$$
w(t,x^+)=-t(H^l_{A_{\max}})^*((H^l)'(R))= t(H^l)'(R^l)R^l= R^lx.
$$ 
In addition, $w$ is actually $C^1$ outside $x=0$, because $w$ is $C^1$ outside $x\in \{t (H^l)'(R^l), t (H^r)'(0),0\}$ and 
$$
\partial_x w(t, (t(H^r)'(0))^-)= -((H^r_{A_{\max}})^*)'((H^r)'(0))= 0,
$$ 
while $\partial_x w(t, (t(H^l)'(R^l))^+)=  -((H^l_{A_{\max}})^*)'((H^l)'(R^l))=R^l$. In addition, for $x=t (H^r)'(0)$, 
$$
\partial_t w(t,x^-)=  -(H^r_{A_{\max}})^*(x/t)+ (x/t) ((H^r_{A_{\max}})^*)'(x/t)= - (H^r_{A_{\max}})^*((H^r)'(0))+ (H^r)'(0) ((H^r_{A_{\max}})^*)'((H^r)'(0))= 0, 
$$
 and, for $x= t(H^l)'(R^l)$, 
 $$
\partial_t w(t,x^+)=  -(H^l_{A_{\max}})^*((H^l)'(R^l))+ (H^l)'(R^l) ((H^l_{A_{\max}})^*)'((H^l)'(R^l))= (H^l)'(R^l)R^l - (H^l)'(R^l) R^l= 0. 
$$
One also easily checks that $w$ satisfies the HJ equation in $(0,\infty)\times (\R\backslash\{0\})$ in a classical sense and the initial condition. Finally, at $x=0$, 
$$
\partial_t w(t,0)+ \min \{  A_{\max}, H^{l,+}(\partial_x^- w(t,0)), H^{r,-}(\partial_x^+w(t,0))\} = -A_{\max} + A_{\max}=0, 
$$
since $\partial_x^- w(t,0)= -((H^l_{A_{\max}})^*)'(0^-)=p^{l,-}_{A_{\max}}$ and $\partial_x^+w(t,0) = p^{r,+}_{A_{\max}}$. 
So $w(t,x)=\bar S(t,\hat \phi_0)(x)$.\\

Next we denote by $u$ the function in the right-hand side of \eqref{ilkjzerdsCC}. We note that, if $A=A_{\max}$, the result is obvious because $\bar S= S^{A_{\max}}$ and we have seen that $ w(t,0^\pm)=-tA_{\max}$. Let us now assume that $A\in[0, A_{\max})$. Let us check that $u$ is continuous at the points $(t, t(H^l)'(p^{l,+}_A))$ and   $(t,t(H^r)'(p^{r,-}_A))$. Indeed, for $x=   t(H^l)'(p^{l,+}_A)\in [ t (H^l)'(R^l), t (H^r)'(0)]$, 
$$
\hat \phi_A(x)-tA = p^{l,+}_A t(H^l)'(p^{l,+}_A)- t \min\{H^l(^{l,+}_A), A_{\max}\}= -t (H^l_{A_{\max}})^*((H^l)'(^{l,+}_A))= w(t,x). 
$$
A symmetric  argument shows that $u$ is continuous at $(t,t(H^r)'(p^{l,-}_A))$.
We now claim that $u$ is $C^1$ near the points $(t, t(H^l)'(p^{l,+}_A))$ and   $(t,t(H^r)'(p^{l,-}_A))$. As the maps $(t,x)\to \hat \phi_A(x)-tA$ and $w$ are $C^1$ near these points, we just need to check that the derivatives coincide at these points. Indeed, for $x=   t(H^l)'(p^{l,+}_A))$,  
$$
\partial_x w(t,  x)= ((H^l_{A_{\max}})^*)'((H^l)'(p^{l,+}_A))= p^{l,+}_A = \partial_x \hat \phi_A(t(H^l)'(p^{l,+}_A)), 
$$
while
\begin{align*}
\partial_t w(t, x) & = -(H^l_{A_{\max}})^*(((H^l)'(p^{l,+}_A))+ (H^l)'(p^{l,+}_A) ((H^l_{A_{\max}})^*)'((H^l)'(p^{l,+}_A))\\
& = (H^l)'(p^{l,+}_A)p^{l,+}_A -H^l(p^{l,+}_A)-(H^l)'(p^{l,+}_A)p^{l,+}_A= -A .
\end{align*}
A symmetric  argument shows that $u$ is $C^1$ near $(t,t(H^r)'(p^{l,-}_A))$.
This also implies that $u$ is a solution to the HJ equation in $(0,\infty)\times (\R\backslash\{0\})$ and satisfies the initial condition. As $u(t,0)= -tA$, $u$ solves therefore the boundary value problem \eqref{eq.boundarypbCC}. Finally, $u$ coincides near $x=0$ with $\hat \phi_A(x)-tA$, which is a solution to the HJ equation with flux limiter $A$. Thus $u$ is also a solution to the HJ equation with flux limiter $A$: this proves the claim.
\bigskip

\noindent {\it Step 2.}   Let us set $\bar A= - S(1, \hat \phi_0)(0)$. We now claim that $\bar A\in [0, A_{\max}]$ and that
\be\label{lkaj:zernCC}
S(t,  \hat \phi_{0})= S^{\bar A}(t,\hat \phi_0).
\ee
while 
\be\label{lkaj:zern2CC}
S(t, \hat \phi_{\bar A})=\hat \phi_{\bar A}- t\bar A= S^{\bar A}(t, \hat \phi_{\bar A}).
\ee

{\it Proof of the claim: } Let $u(t,x):= S(t,  \hat \phi_{0})(x)$. By the HJ equation, $\partial_t u \in [- A_{\max}, 0]$ a.e.. Hence $\bar A\in [0,A_{\max}]$. 
We now prove \eqref{lkaj:zernCC}. By assumption (H6), the map $w(t,x):= S(t, \hat \phi_{\bar A})(x)$ is scale invariant: for any $\ep>0$, $\ep w(t/\ep, x/\ep)= w(t,x)$. Thus $w(t,x)= t w(1, x/t)$. As a consequence, $w(t,0)= -t \bar A$ for any $t\geq 0$ and thus $w$ is the unique viscosity solution to the boundary value problem \eqref{eq.boundarypbCC} with $A:=\bar A$. By Step~1, we obtain $w(t,x)= S^{\bar A}(t,\hat \phi_0)(x)$, which is \eqref{lkaj:zernCC}. 

To prove  \eqref{lkaj:zern2CC}, we  use again the self similarity to obtain that $S(t, \hat \phi_{\bar A})(x)= tS(1, \hat \phi_{\bar A})(x/t)$. Now, by  \eqref{ilkjzerdsCC} and for any $t>0$, there is some $\delta>0$ such that $u(s,x)= S^{\bar A}(s,\hat \phi_0)(x)$ coincides with $\hat \phi_{\bar A}(x)-s\bar A$ in $[t,t+\delta]\times [-\delta, \delta]$. Thus, by finite speed of propagation (H4) and semi-group properties, 
$$
S(h, \hat \phi_{\bar A}(\cdot)-t\bar A)(x)=S(h,u(t,\cdot))(x)= u(t+h,x)= \hat \phi_{\bar A}(x)-(t+h)\bar A
$$
for $h>0$ and $|x|$ small enough. This implies (using (H3)) that $S(h, \hat \phi_{\bar A}(\cdot))(0)= -h\bar A$, and thus that $S(1, \hat \phi_{\bar A})(0)= -\bar A$. We infer that $(t,x)\to S(t, \hat \phi_{\bar A})(x)$ is a solution to the boundary value problem \eqref{eq.boundarypbCC} with $A:=\bar A$ and initial condition $\hat \phi_{\bar A}$.   But $(t,x)\to \hat \phi_{\bar A}(x)-t\bar A$ is  the unique solution to this boundary value problem, which proves \eqref{lkaj:zern2CC}.
\bigskip

\noindent {\it Step 3.}  We next claim that, for any $A\in [0,\bar A]$, 
$$
S(t, \check \phi_A) (0)=-At. 
$$

{\it Proof of the claim:} By scale invariance, we know that $S(t, \check \phi_A)(x)= t S(1, \check \phi_A)(x/t)$. Recall also that $S(t,  \check \phi_A)\geq \bar S(t , \check \phi_A)=  \check \phi_A- tA$. Thus $S(1, \check \phi_A)(0)\geq -A$. Our aim is to prove the equality. Let us assume for a while that 
$\beta:= -S(1, \check \phi_A)(0) <  A$. Then $(t,x) \to S(t, \check \phi_A)(x)$ is the unique viscosity solution to the boundary value problem 
\be\label{eq.BVPbetaCC}
\left\{\begin{array}{l}
\partial_t u +H(x,\partial_xu) = 0 \qquad \text{in}\; (0,\infty)\times (\R\backslash\{0\})\\
u(t,0) = - t \beta \qquad  \text{in} \; (0, \infty)\\
u(0,x)= \check \phi_A(x)  \qquad \text{in}\; \R
\end{array}\right. 
\ee
We note that 
\be\label{iuylzekuhrjdf}
S(t, \check \phi_A)(x) =\max\{  \check \phi_A(x)-At , \hat \phi_\beta(x)-\beta t\}.
\ee
Indeed, the function on the right-hand side satisfies the boundary conditions at $x=0$ (since $\beta<A$) and at $t=0$ (since $\check \phi_A\geq \hat \phi_\beta$), and is a viscosity solution in $\R_+\times (\R\backslash \{0\})$ as the maximum of two viscosity solutions, the Hamiltonian being concave. Thus \eqref{iuylzekuhrjdf} holds.  

Given $t>0$, there exists $\delta>0$ such that $S(s, \check \phi_A)(x) =  \hat \phi_\beta(x)-\beta s$ on $[t, t+\delta]\times [-\delta,\delta]$. Hence, by localization,  
$$
S(h,  \hat \phi_\beta)(x)= S(t+h, \check \phi_A)(x) +\beta (t+h)= \hat \phi_\beta(x)-\beta h
$$
for $h>0$ small and $|x|$ small. Thus $S(h,  \hat \phi_\beta)(0)= -\beta h$ for $h>0$ small. Using the scaling invariance, we infer that $S(1, \hat \phi_\beta)(0)=-\beta$. On the other hand, as  $\hat \phi_\beta \leq \hat \phi_{\bar A}$ (because $\beta<A\leq \bar A$), we have by comparison and \eqref{lkaj:zern2CC}: 
$$
-\beta = S(1, \hat \phi_\beta)(0)\leq S(1, \hat \phi_{\bar A})(0)= -\bar A, 
$$
a contradiction with $\beta < \bar A$. So we have proved the equality $S(1, \check \phi_A)= -A$. 
\bigskip

\noindent {\it Step 4.}  We finally prove that $S= S^{\bar A}$. Fix $u_0\in Lip$ and set $u(t,x)=S(t,u_0)(x)$. Let us check that $u$ solves \eqref{eq.HJfluxlimitedCC}. Recall that $u$ solves the HJ equation in $\R_+\times (\R\backslash\{0\})$ and the initial condition $u_0$ at $t=0$. It remains to check the viscosity solution property at $x=0$. 

We first prove that $u$ is a supersolution to \eqref{eq.HJfluxlimitedCC}. As $u(t, \cdot)\in Lip$, we have, for any $t\geq 0$,
$$
u(t,x) \geq u(t,0) + \hat \phi_0(x).
$$
Hence, by comparison (H2),  commutation with constants (H3) and semi-group property (H1), 
$$
u(t+h,x) \geq u(t,0) + S(h, \hat \phi_0)(x)\qquad \forall h>0, \; \forall x\in \R.
$$
So, by \eqref{lkaj:zernCC} and \eqref{ilkjzerdsCC} (applied to $x=0\in [t(H^l)'(p^{l,+}_A), t(H^r)'(p^{r,-}_A)]$),  
$$
u(t+h, 0) \geq u(t,0) -h\bar A,  
$$ 
which shows that $\partial_t u(t,0) +\bar A \geq 0$ in the viscosity sense. By \cite[Theorem 2.7]{IM17} (written in \cite{IM17} for convex Hamiltonians while here $H$ is concave), $u$ is therefore a viscosity supersolution to \eqref{eq.HJfluxlimitedCC}. 

We now prove that $u$ is a subsolution to \eqref{eq.HJfluxlimitedCC}. Let $\varphi:\R\to \R$ be a $C^1$ test function on the junction (meaning that $\varphi$ is continuous in $\R$, $C^1$ in $(-\infty, 0]$ and in  $[0,\infty)$) and $\psi$ be a $C^1$ test function such that $(t,x)\to u-\varphi(x)-\psi(t)$ has a local maximum at some $(\bar t, 0)$ with $\bar t>0$. We have to check that 
\be\label{cond.sursolCC}
\partial_t\psi (\bar t) + \min\{ \bar A, H^{l,+}(\partial_x^-\varphi(0)),H^{r,-}(\partial_x^+\varphi(0))\} \leq 0. 
\ee
Without loss of generality, we suppose that $\varphi(0)=0$ and $\psi(\bar t)= u(\bar t, 0)$. Following \cite[Theorem 2.7]{IM17}, we can also assume that \be\label{mozesrdijft}
H^l(\partial_x^-\varphi(0))=H^r(\partial_x^+\varphi(0))=\bar A. 
\ee
Note that \eqref{cond.sursolCC} is obvious if $\bar A=0$ because $u$ is nonincreasing in time and thus $\psi'(\bar t)\leq 0$. 
We now suppose that  $\bar A>0$. Fix $A\in [0,\bar A)$. Then, because of  \eqref{mozesrdijft}, we have locally around $x=0$, $\varphi(x) \leq \check \phi_A(x)$ with an equality at $x=0$. Hence $u(t,x) \leq \check \phi_A(x)+\psi(t)$ around $(\bar t, 0)$. By (H2), (H3) and (H4), this implies that, for $h>0$  and $|x|$ small,  
$$
u(\bar t,x) = S(h, u(\bar t-h, \cdot))(x) \leq S(h, \check \phi_A)(x) +\psi(\bar t-h). 
$$
Using Step 3, we infer that 
$$
\psi(\bar t)= u(\bar t, 0) \leq -A h +\psi(\bar t-h). 
$$
Hence 
$$
\psi'(\bar t) +A \leq 0. 
$$
As $A<\bar A$ is arbitrary, we can conclude that $\psi'(\bar t) +\bar A \leq 0$ and thus that \eqref{cond.sursolCC} holds.  

In conclusion, $u=S(u_0)$ is a viscosity solution to \eqref{eq.HJfluxlimitedCC}, which proves that $S=S^{\bar A}$. 
\end{proof} 

\section{Identification of the CL}

As in the previous part, we fix two smooth, Lipschitz and uniformly concave Hamiltonians  $H^{l}: [0,R^l]\to \R$ and $H^r: [0,R^r]\to \R$, with $H^l(R^l)=H^r(R^{r})=H^l(0)=H^r(0)=0$ (for some $R^l, R^r>0$) and  set
$$
H(x,p)= H^l(p){\bf 1}_{\{x<0\}}+ H^r(p){\bf 1}_{\{x>0\}}.
$$
We now denote by $L^\infty$ the set of measurable maps $\rho$ such that $\rho(x)\in [0,R^l]$ a.e. in $(-\infty, 0)$ and $\rho(x)\in [0,R^r]$ a.e. in $(0,\infty)$. We endow the set $L^\infty$ with the topology of the $L^1_{loc}$ convergence. We denote by $\bar S^{CL,l}$ (resp. $\bar S^{CL,r}$)  the classical semi-group of conservation law in $(0,\infty)\times \R$ associated with the flux $H^{l}$ (resp. $H^{r}$).

Our aim is to identify any semi-group $S^{CL}$ on $L^\infty$ satisfying the following properties: 
\begin{align*}
(H1) & \qquad \text{$S^{CL}:[0,\infty)\times L^\infty\to L^\infty$ is a continuous homogeneous  semi-group,} \\
(H2) &\qquad   \text{$S^{CL}$ satisfies the $L^1$ contraction property: for any $u^1_0,u^2_0\in L^\infty\cap L^1(\R)$, }\\
& \qquad \qquad \text{$\|S^{CL}(t,u^0_1)- S^{CL}(t,u_0^2)\|_{L^1(\R)}\leq \|u^1_0-u^2_0\|_{L^1(\R)}$,}  \\
(H3) &\qquad \text{$S^{CL}$ preserves mass: if $\rho_0\in L^\infty\cap L^1(\R)$, then $\int_\R S^{CL}(t, u_0)= \int_\R u_0$ for any $t\geq 0$,} \\
(H4) &\qquad \text{$S^{CL}$ has a finite speed of propagation: there exists $C_0>0$ such that,}\\
& \qquad \qquad\text{ if $u_0^1= u_0^2$ on $[a,b]$, then $S^{CL}(t,u^0_1)= S^{CL}(t,u_0^2)$ on $[a+C_0t, b-C_0t]$ for $t\in[0, C_0^{-1}(b-a)]$, }\\
(H5) &\qquad \text{$S^{CL}$ coincides locally with $\bar S^{CL,l/r}$ away from $x=0$: for any $u_0\in L^\infty$ and $x\neq 0$, } \\
&  \qquad \qquad \text{$S^{CL}(t,u_0)(y)=\left\{ \begin{array}{ll}
\bar S^{CL,l}(t,u_0)(y)& \text{if}\; x<0\\
\bar S^{CL,r}(t,u_0)(y)& \text{if}\; x>0
\end{array}\right.$  for any $(t,y)$ with $|y-x|<|x|-C_0t$,}\\ 
(H6) &\qquad \text{$S^{CL}$ is scale invariant: for any $\ep,t >0$ and $u_0\in L^\infty$, $ S^{CL}(t/\ep,  u_0(\cdot/\ep))(x/\ep)= S^{CL}(t,u_0)(x)$.}
\end{align*}

\begin{Theorem}\label{thm.main2}  Under the conditions above, there exists $\bar A\in [0,A_{\max}]$ such that $S^{CL}=S^{CL,\bar A}$, where $A_{\max}=\min\{\max H^l,\max H^r\}$ and  $S^{CL,\bar A}$ is the conservation law semi-group with flux-limiter $\bar A$, i.e., for a given initial condition $\rho_0\in L^\infty$, the entropy solution to 
\be\label{eq.CLgerm}
 \left\{\begin{array}{l}
\partial_t \rho +\partial_x(H(x,\rho)) = 0 \qquad \text{in}\; (0,\infty)\times (\R\backslash\{0\})\\
(\rho(t,0^-),\rho(t,0^+))\in \mathcal G_{\bar A} \qquad  \text{a.e. in} \; (0, \infty)\\
\rho(0,x)= \rho_0(x)  \qquad \text{in}\; \R
\end{array}\right. 
\ee
where the germ $\mathcal G_{\bar A}$ is given by 
$$
\mathcal G_{\bar A}=\left\{(q^-,q^+)\in [0,R^l]\times [0,R^r], \; H^l(q^-)=H^r(q^+)= \min \{\bar A, H^{l,+}(q^-),H^{r,-}(q^+)\} \; \right\}.
$$
\end{Theorem}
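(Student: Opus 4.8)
The plan is to mirror the structure of the proof of Theorem \ref{thm.main1}, transporting it from the Hamilton--Jacobi setting to the conservation law setting, and where convenient to use the known correspondence between HJ and CL semi-groups (\cite{CFGM,FIM24}) to import what has already been established. The overall strategy is: first identify $S^{CL}(t,\cdot)$ on a few explicit Riemann-type data, namely the constant-in-space densities $p^{l,\pm}_A$ on $\{x<0\}$ glued to $p^{r,\pm}_A$ on $\{x>0\}$ (these are exactly the ``$\partial_x \check\phi_A$'' and ``$\partial_x \hat\phi_A$'' of the HJ part), then define $\bar A$ through the value of the flux of $S^{CL}$ at the junction on one such datum, then bootstrap from these special solutions to all initial data using them as Kruzhkov-type test functions at $x=0$.

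The concrete steps I would carry out are as follows. \textbf{Step 0 (flux at the junction is well defined).} Using (H5), for $u_0\in L^\infty$ the map $(t,x)\mapsto S^{CL}(t,u_0)(x)$ is an entropy solution of the CL away from $x=0$; by the theory of \cite{AKR11} it admits strong traces $\rho(t,0^\pm)$ and, thanks to the Rankine--Hugoniot condition that is forced by (H3) (mass conservation) together with (H4)--(H5), the normal fluxes match: $H^l(\rho(t,0^-))=H^r(\rho(t,0^+))=:q(t)$, the flux through the vertex. \textbf{Step 1 (self-similar building blocks).} By scale invariance (H6), on a self-similar datum $\rho_0$ the solution $S^{CL}(t,\rho_0)(x)=R(x/t)$ is the Riemann solver at the junction, determined entirely by the single number $q$ (the junction flux) via the left and right wave fans; this is the CL analogue of \eqref{ilkjzerdsCC}, and it is here that one checks that the $\mathcal G_A$-entropy solution with flux $A$ coincides with the classical one-sided Riemann solutions outside the vertex fan. \textbf{Step 2 (definition of $\bar A$).} Set $\bar A := H^l(\rho(1,0^-))$ where $\rho = S^{CL}(t,\cdot)$ applied to the ``maximal'' datum, i.e.\ the one whose HJ primitive is $\hat\phi_0$ (density $R^l$ on the left, $R^r$ on the right, the datum forcing the largest possible outflow); self-similarity forces $q(t)\equiv \bar A$, and one reads off $\bar A\in[0,A_{\max}]$ from the bound $0\le q(t)\le A_{\max}$. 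One then shows, exactly as in Steps 1--2 of the HJ proof, that $S^{CL}$ on this datum and on the $\mathcal G_{\bar A}$-adapted datum coincides with $S^{CL,\bar A}$. \textbf{Step 3 (comparison on test densities).} For $A\in[0,\bar A]$, prove that the junction flux of $S^{CL}$ on the datum $(\partial_x\check\phi_A)$ equals $A$; the inequality $\ge$-type bound follows from $S^{CL}\ge$ the classical solution away from the vertex plus maximality of $\bar A$, and equality is forced by a contradiction argument combined with the localization property (H4) and semi-group property (H1), just as in Step 3 of Theorem \ref{thm.main1}. \textbf{Step 4 (conclusion).} For arbitrary $\rho_0$, one verifies that $\rho=S^{CL}(t,\rho_0)$ satisfies the two one-sided inequalities defining a $\mathcal G_{\bar A}$-entropy solution at $x=0$: using that $\rho(t,\cdot)$ lies pointwise above/below the appropriate self-similar profiles, combined with $L^1$-contraction (H2), mass conservation (H3), finite propagation (H4) and the values computed in Steps 2--3, one gets that the trace pair $(\rho(t,0^-),\rho(t,0^+))$ lies in $\mathcal G_{\bar A}$ for a.e.\ $t$; since $\mathcal G_{\bar A}$ is a maximal $L^1$-dissipative complete germ (\cite{AKR11}), the $\mathcal G_{\bar A}$-entropy solution is unique, hence $S^{CL}=S^{CL,\bar A}$.

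An alternative and possibly shorter route for Steps 1--4 is to differentiate: given $\rho_0\in L^\infty$, let $u_0\in Lip$ be a primitive, apply the already-proven Theorem \ref{thm.main1} to the HJ semi-group $S^{HJ}$ canonically associated to $S^{CL}$ (the association is legitimate because (H1)--(H6) for $S^{CL}$ translate into (H1)--(H6) for $S^{HJ}$, the $L^1$ CL-contraction becoming $L^\infty$ HJ-contraction and mass conservation becoming commutation with constants), obtain a flux limiter $\bar A$ with $S^{HJ}=S^{HJ,\bar A}$, and then invoke the HJ/CL correspondence of \cite{FIM24} (valid for coercive, here concave, Hamiltonians), which identifies the spatial derivative of the flux-limited HJ solution with the $\mathcal G_{\bar A}$-entropy solution of the CL, with $\mathcal G_{\bar A}$ exactly the germ written in the statement. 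This reduces Theorem \ref{thm.main2} to a translation lemma plus citation.

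The main obstacle in the self-contained approach is Step 0 together with the trace bookkeeping in Step 4: unlike the HJ setting, where one works with continuous functions and can evaluate pointwise at $x=0$, here one must handle $L^\infty$ densities, justify the existence of strong one-sided traces at the junction (relying on \cite{AKR11}), and push the pointwise ordering of profiles through to an a.e.\ statement about the traces $(\rho(t,0^-),\rho(t,0^+))$; the Kruzhkov-type doubling that replaces the viscosity-solution test-function argument of Theorem \ref{thm.main1} is the technically heaviest point. If instead one takes the differentiation route, the only real obstacle is checking carefully that properties (H1)--(H6) for $S^{CL}$ transfer to the associated HJ semi-group and that the correspondence in \cite{FIM24} applies verbatim in the concave (rather than convex) case and matches the germ $\mathcal G_{\bar A}$; this is essentially a verification rather than a new idea.
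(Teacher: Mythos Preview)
Your self-contained route (Steps 0--4) has the right skeleton and correctly identifies Step 0 (Rankine--Hugoniot from mass conservation) and the use of self-similarity to define $\bar A$. However, you are missing the paper's key device for Step 4. The paper does \emph{not} carry out a Kruzhkov-type doubling at the junction, nor does it push pointwise orderings of profiles through to trace statements. Instead it proves a \emph{blow-up lemma} (Lemma \ref{lem.statsol}): for a.e.\ $\bar t>0$, the rescalings $\rho^\ep_{\bar t}(t,x)=S(\bar t+\ep t,\rho_0)(\ep x)$ converge in $L^1_{loc}$ to the piecewise constant function $q^-_{\bar t}{\bf 1}_{x<0}+q^+_{\bar t}{\bf 1}_{x>0}$ with $(q^-_{\bar t},q^+_{\bar t})=(\rho(\bar t,0^-),\rho(\bar t,0^+))$, and this limit is a \emph{stationary} solution for $S$. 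The proof uses Monneau's strong trace result for gradients of HJ solutions \cite{Mon23} together with the Panov--Vasseur traces \cite{Ps07,V01}. Once this is in hand, the conclusion reduces to a finite classification: which piecewise constant profiles $q^-{\bf 1}_{x<0}+q^+{\bf 1}_{x>0}$ with $H^l(q^-)=H^r(q^+)$ can be stationary for $S$? Lemmas \ref{lem.psicannot} and \ref{lem.pcannot} rule out, by elementary comparison and localization arguments, every such profile not in $\mathcal G_{\bar A}$. This completely sidesteps the ``heaviest point'' you flagged.

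Your differentiation route has a genuine gap. The assertion that $(H2)$ for $S^{CL}$ (the $L^1$ contraction) becomes $(H2)$ for the integrated semi-group $S^{HJ}$ (the $L^\infty$ contraction) is not correct as stated. By \cite{CT80}, $(H2)+(H3)$ for $S^{CL}$ is equivalent to the comparison $\rho_0^1\le\rho_0^2\Rightarrow S^{CL}(t,\rho_0^1)\le S^{CL}(t,\rho_0^2)$, whereas $(H2)+(H3)$ for $S^{HJ}$ is equivalent to $u_0^1\le u_0^2\Rightarrow S^{HJ}(t,u_0^1)\le S^{HJ}(t,u_0^2)$. But $u_0^1\le u_0^2$ gives no pointwise ordering of $\partial_x u_0^1$ and $\partial_x u_0^2$, so CL comparison does not feed into HJ comparison in the way you suggest. (Incidentally, the ``maximal'' datum with HJ primitive $\hat\phi_0$ has density $R^l$ on the left and $0$ on the right, not $R^r$ on the right; the paper in fact initializes $\bar A$ from $\hat\psi_{A_{\max}}$ rather than $\hat\psi_0$.) The paper does use the HJ/CL link, but only locally (Lemma \ref{lem.HJvsCL}) to compute explicit solutions via the boundary value problem \eqref{eq.boundarypbCC}, not to transport the full set of hypotheses.
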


Above $(H^{l/r})^+$ (respectively $(H^{l/r})^-$) denotes the smallest nondecreasing (resp. nonincreasing) map above $H^{l/r}$. 
The notion of $\mathcal G-$entropy solution for \eqref{eq.CLgerm} is introduced in \cite{AKR11}: it is a Kruzhkov entropy solution in $(0,\infty)\times (\R\backslash\{0\})$, its trace at $t=0$ is $\rho_0$ and its trace at $x=0$ belongs to  $\mathcal G_{\bar A}$ (the existence of a strong trace being given by \cite{Ps07, V01}). As $\mathcal G_{\bar A}$ is a maximal, $L^1-$dissipative and complete germ (see \cite{CFGM}---see also  \cite{AKR11} for the definition of these notions, which are not explicitly used here), the solution to \eqref{eq.CLgerm}  exists and is unique (see \cite{AKR11}). \\

We recall that, because of Assumption (H3), Assumption (H2) is equivalent to a comparison principle for $S^{CL}$: if $\rho_{0,1}, \rho_{0,2}\in L^\infty$ and  $\rho_{0,1}\leq \rho_{0,2}$, then $S^{CL}(\rho_{0,1})\leq S^{CL}(\rho_{0,2})$ (see \cite{CT80}). \\ 

Throughout the proof, we set $S= S^{CL}$, $S^A= S^{CL, A}$ and $\bar S^{l/r}= \bar S^{CL,l/r}$ for simplicity. We also assume without loss of generality that $C_0\geq \|H'\|_\infty$, so that the CL semi-groups $\bar S^{l/r}$ also have a finite speed of propagation  $C_0$.  \\ 

Our first step is a standard consequence of the mass preservation property: 

\begin{Lemma}\label{lem.RH} Let $\rho_0\in L^\infty$ and $\rho= S(\rho_0)$. Then $u$ satisfies a kind of Rankine-Hugoniot condition at $x=0$:
$$
H^l(\rho(t, 0^-))= H^r(\rho(t, 0^+))\qquad \text{ for a.e. $t\geq 0$.}
$$
\end{Lemma}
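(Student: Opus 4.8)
The statement is a conservation-of-mass argument applied to thin strips straddling the junction. The plan is to test the mass-preservation property (H3) against initial data that differ only near $x=0$, and to compare the resulting balance with the corresponding balance for the one-sided semi-groups $\bar S^{l}$ and $\bar S^{r}$, which satisfy the classical conservation law and hence the usual Rankine--Hugoniot relation along $x=0$. Concretely, fix $\rho_0\in L^\infty$, write $\rho=S(\rho_0)$, and consider, for small $\delta>0$, the function $m_\delta(t):=\int_{-\delta}^{\delta}\rho(t,x)\,dx$. Since $\rho$ is an entropy (in particular, distributional) solution of $\partial_t\rho+\partial_x(H(x,\rho))=0$ on each of $(0,\infty)\times(-\infty,0)$ and $(0,\infty)\times(0,\infty)$, it has strong one-sided traces $\rho(t,0^\pm)$ (by \cite{Ps07,V01}), and integrating the equation over $(-\delta,0)$ and over $(0,\delta)$ gives, for a.e. $t$,
\be\label{eq.RHplan1}
\frac{d}{dt}\int_{-\delta}^{0}\rho(t,x)\,dx = H^l(\rho(t,0^-)) - H^l(\rho(t,-\delta)),\qquad
\frac{d}{dt}\int_{0}^{\delta}\rho(t,x)\,dx = H^r(\rho(t,\delta)) - H^r(\rho(t,0^+)),
\ee
at least in the sense of distributions in $t$, with the boundary flux terms understood via the strong traces at $x=\pm\delta$.

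Now I would use (H3) and (H5) to kill the net change of total mass near the junction. By mass preservation applied on a large interval $[-L,L]$ with $L$ chosen so that $[-L,L]$ contains the domain of dependence, $\frac{d}{dt}\int_{-L}^{L}\rho(t,x)\,dx$ equals the incoming flux at $x=-L$ minus the outgoing flux at $x=L$; but away from $x=0$ (by (H5)) $\rho$ coincides with the classical semi-groups $\bar S^{l/r}$, which are themselves mass-preserving, so the outer contributions cancel exactly and one is left with
$$
\frac{d}{dt}\int_{-\delta}^{\delta}\rho(t,x)\,dx = H^l(\rho(t,0^-)) - H^r(\rho(t,0^+)) + \big(\text{fluxes at }x=\pm\delta\big),
$$
where the bracketed terms are the classical boundary fluxes of $\bar S^{l}$ at $-\delta$ and $\bar S^{r}$ at $\delta$ and, by the conservation property of those semi-groups, coincide with $\frac{d}{dt}$ of the mass in $[-L,-\delta]$ and $[\delta,L]$. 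Subtracting, the only surviving term is $H^l(\rho(t,0^-))-H^r(\rho(t,0^+))$. Letting $\delta\to0$, the left side tends to $0$ in (say) $L^1_{loc}(0,\infty)$ since $\rho$ is bounded, forcing
$$
H^l(\rho(t,0^-)) = H^r(\rho(t,0^+))\qquad\text{for a.e. }t\ge0.
$$
Alternatively, and perhaps more cleanly, one may use the distributional formulation directly: test $\partial_t\rho+\partial_x(H(x,\rho))=0$ on all of $(0,\infty)\times\R$ with $\varphi(t)\theta_\delta(x)$, where $\theta_\delta$ is a Lipschitz cutoff equal to $1$ on $[-\delta,\delta]$ and supported in $[-2\delta,2\delta]$; the mass-preservation identity (H3), being the statement that $\int_\R\rho(t,\cdot)$ is constant, shows that the contribution with $\varphi'$ vanishes in the limit $\delta\to0$, while the flux term collapses to $\int\varphi(t)\,[H^r(\rho(t,0^+))-H^l(\rho(t,0^-))]\,dt$, which must therefore be zero for every $\varphi$, giving the claim.

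The one genuinely delicate point is justifying the use of strong one-sided traces of $H^{l/r}(\rho(t,0^\pm))$ and the passage to the limit $\delta\to0$ in the boundary-flux terms at $x=\pm\delta$ — i.e. checking that $t\mapsto H^{l}(\rho(t,\pm\delta))$ converges (in $L^1_{loc}$, or weakly-$*$) to $t\mapsto H^{l/r}(\rho(t,0^\mp))$ as $\delta\to0$. This is exactly the existence-of-strong-trace input of \cite{Ps07,V01} for the one-sided conservation laws, which applies because, by (H5), near $x=0^\pm$ the function $\rho$ is a Kruzhkov entropy solution of a genuinely nonlinear (here strictly concave) scalar conservation law; with that in hand the rest is elementary. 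Everything else — the integration by parts in \eqref{eq.RHplan1}, the cancellation of outer fluxes, and the limit — is routine and I would not belabor it.
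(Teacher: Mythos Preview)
Your first approach is essentially the paper's argument, but you over-engineer it: the paper simply reduces (via (H4)) to compactly supported $\rho_0$, tests the weak formulation on $\R_+\times(\R\backslash\{0\})$ against a cutoff $\psi_\ep$ that \emph{vanishes} on $[-\ep,\ep]$ and equals $1$ outside $[-2\ep,2\ep]$, and lets $\ep\to 0$. The point of having $\psi_\ep$ vanish near $0$ is that the equation is only known to hold on $\R\backslash\{0\}$, so the test function must avoid the junction; the limit then reads $\int_\R\rho(t)+\int_0^t\bigl(H^r(\rho(s,0^+))-H^l(\rho(s,0^-))\bigr)ds=\int_\R\rho_0$, and (H3) kills the outer terms. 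Your detour through $[-\delta,\delta]$, the intervals $[-L,-\delta]$ and $[\delta,L]$, and the invocation of (H5) and $\bar S^{l/r}$ is unnecessary: once $\rho_0$ has compact support, the fluxes at $\pm L$ vanish automatically because $H^{l}(0)=H^{r}(0)=0$, and (H5) plays no role. (Also, your displayed identities \eqref{eq.RHplan1} carry a sign error: integrating $\partial_t\rho+\partial_x H^l(\rho)=0$ over $(-\delta,0)$ gives $\frac{d}{dt}\int_{-\delta}^0\rho=H^l(\rho(t,-\delta))-H^l(\rho(t,0^-))$.)

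Your second ``cleaner'' route, however, has a genuine gap. You propose testing $\partial_t\rho+\partial_x(H(x,\rho))=0$ on all of $(0,\infty)\times\R$ against $\varphi(t)\theta_\delta(x)$ with $\theta_\delta\equiv 1$ near $x=0$. But the equation is \emph{not} known to hold in the sense of distributions across $x=0$; establishing that is precisely equivalent to the Rankine--Hugoniot relation you are trying to prove. If instead you split into the two half-lines and add (which is legitimate), the boundary contributions at $0^\pm$ produce the term $\int\varphi\,[H^l(\rho(\cdot,0^-))-H^r(\rho(\cdot,0^+))]$, while the flux term $\int\varphi\int H(x,\rho)\partial_x\theta_\delta$ converges, by the strong-trace argument you cite, to the \emph{same} expression with opposite sign---so the identity collapses to $0=0$ and nothing is learned. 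Moreover, your justification that ``(H3) shows the $\varphi'$ contribution vanishes'' is not correct: that contribution is $\int\varphi'(t)\int_\R\rho(t,x)\theta_\delta(x)\,dx\,dt$, which tends to zero simply because $|\theta_\delta|\leq 1$ has support of measure $O(\delta)$; mass preservation is nowhere used. The fix is exactly what the paper does: flip the cutoff so it vanishes near $0$ and equals $1$ at infinity; then the $\varphi'$ term survives in the limit as $\int\varphi'\int_\R\rho(t)$, which \emph{is} zero by (H3), and the flux term yields the desired relation.
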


\begin{proof} As the property we aim to prove is local near $x=0$ and $S$ satisfies a finite speed of propagation, we can assume that $\rho_0$ has a compact support. Let $\psi:\R\to [0,1]$ be a smooth nonnegative test function equal to $1$ on $\R\backslash(-2,2)$ and vanishing on $[-1,1]$. 
For $\ep>0$, we set $\psi_\ep(x)= \psi(x/\ep)$. As $\rho$ is a solution of the CL in $\R_+\times (\R\backslash \{0\})$, we have, for any $t\geq0$, 
$$
\int_\R \rho(t)\psi_\ep + \int_0^t \int_\R H(x,\rho) \psi_{\ep, x} = \int_\R \rho_0\psi_\ep
$$
We let $\ep\to 0$ to find 
$$
\int_\R \rho(t) + \int_0^t  (H^r(\rho(s,0^+))-H^l(\rho(s,0^-)))ds = \int_\R \rho_0,
$$
which proves the claim thanks to (H3). 
\end{proof}

The next remark makes the link between the CL and the HJ equation. 

\begin{Lemma}\label{lem.HJvsCL} Let $\rho_0\in L^\infty$ have a compact support and assume that $\rho$ is an entropy solution of the CL in $\R_+\times (\R\backslash\{0\})$ with initial condition $\rho_0$. Let  $u(t,x):= \int_{-\infty}^x \rho(t,y)dy$. Then $u$ is a viscosity solution to the HJ equation 
\be\label{eq.HJbis}
\left\{\begin{array}{l}
\partial_t u +H(x,\partial_xu) = 0 \qquad \text{in}\; (0,\infty)\times (\R\backslash\{0\})\\
u(0,x)= u_0(x)  \qquad \text{in}\; \R
\end{array}\right. 
\ee
where $u_0(x):= \int_{-\infty}^0 \rho_0(y)dy$. If, in addition, there exists $(q^-,q^+) \in [0,R^l]\times [0,R^r]$  such that  
\be\label{hyp.ctbd}
(\rho(t, 0^-), \rho(t,0^+))= (q^-,q^+)\qquad \text{for a.e. $t\geq 0$,}
\ee
then $u$ satisfies also the boundary condition 
\be\label{eq.boundarycond}
u(t,0) = - t A +u_0(0)\qquad  \text{in} \; (0, \infty)
\ee
where $A:= H^l(q^-)=H^r(q^+)$. 

Finally, for any $\rho_0\in L^\infty$,  if $\rho:=S(\rho_0)$ satisfies \eqref{hyp.ctbd}, there exists $u_0\in Lip$ such that $\rho=\partial_x u$ where $u$  is the unique viscosity solution to the boundary value problem \eqref{eq.HJbis}-\eqref{eq.boundarycond}. 
\end{Lemma}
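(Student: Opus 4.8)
The plan is to use the classical correspondence between entropy solutions of a scalar conservation law and viscosity solutions of the associated Hamilton--Jacobi equation \emph{separately} on the two half-planes $\{x<0\}$ and $\{x>0\}$, and to glue the two across $x=0$ by hand. For the first assertion, since $\rho_{0}$ has compact support, by finite speed of propagation $\rho(t,\cdot)$ is compactly supported for each $t$, so $u(t,x)=\int_{-\infty}^{x}\rho(t,y)\,dy$ is well defined, Lipschitz in $x$ with constant $\max\{R^{l},R^{r}\}$, and continuous in $t$ (because $t\mapsto\rho(t,\cdot)$ is continuous into $L^{1}(\R)$); hence $u$ is continuous on $\R_{+}\times\R$, $u(t,\cdot)\in Lip$, and $u(0,\cdot)=u_{0}$. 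On $\{x<0\}$, $\rho$ is an entropy solution of $\partial_{t}\rho+\partial_{x}(H^{l}(\rho))=0$, and the viscosity solution (up to an additive constant) of $\partial_{t}v+H^{l}(\partial_{x}v)=0$ there is characterized by $\partial_{x}v$ being that entropy solution; so it suffices to check that $\partial_{x}u=\rho$ (clear) and that $u$ carries the correct time-drift. Integrating the conservation law over $(-\infty,x)$ and using $H^{l}(0)=0$ gives $\partial_{t}u=-H^{l}(\rho)$ a.e.\ on $\{x<0\}$; integrating over $(-\infty,x)$ for $x>0$, and using in addition the Rankine--Hugoniot relation $H^{l}(\rho(t,0^{-}))=H^{r}(\rho(t,0^{+}))$ --- valid because $\rho$ is a weak solution of the conservation law across $\{x=0\}$, which for $\rho=S(\rho_{0})$ is Lemma~\ref{lem.RH} --- gives $\partial_{t}u=-H^{r}(\rho)$ a.e.\ on $\{x>0\}$. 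By the correspondence of \cite{CFGM,FIM24}, $u$ is then the viscosity solution of the Hamilton--Jacobi equation on each half-plane, and being continuous with the right initial datum it solves \eqref{eq.HJbis}.

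For the boundary condition \eqref{eq.boundarycond} under \eqref{hyp.ctbd}, I would run the computation of the proof of Lemma~\ref{lem.RH} on the half-strip $(0,t)\times(-\infty,0)$: testing the conservation law against a cut-off equal to $1$ on $\{x<-2\ep\}$ and $0$ on $\{x>-\ep\}$ and letting $\ep\to0$ yields $\int_{-\infty}^{0}\rho(t,y)\,dy-\int_{-\infty}^{0}\rho_{0}(y)\,dy+\int_{0}^{t}H^{l}(\rho(s,0^{-}))\,ds=0$ (using $H^{l}(0)=0$). By \eqref{hyp.ctbd} the strong trace $\rho(s,0^{-})$ (which exists by \cite{Ps07,V01}) equals the constant $q^{-}$, so this reads $u(t,0)=u_{0}(0)-tH^{l}(q^{-})=u_{0}(0)-tA$ with $A=H^{l}(q^{-})=H^{r}(q^{+})$ (the equality of the two fluxes being Lemma~\ref{lem.RH}); in particular $A\le\min\{\max H^{l},\max H^{r}\}=A_{\max}$.

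For the last assertion, given $\rho_{0}\in L^{\infty}$ with $\rho=S(\rho_{0})$ satisfying \eqref{hyp.ctbd}, the primitive based at $-\infty$ may diverge, so I would instead set $u_{0}(x):=\int_{0}^{x}\rho_{0}(y)\,dy$ and $w(t,x):=-tA+\int_{0}^{x}\rho(t,y)\,dy$, where $A=H^{l}(q^{-})=H^{r}(q^{+})\in[0,A_{\max}]$. Since $u_{0}'=\rho_{0}$ takes values in $[0,R^{l}]$ on $(-\infty,0)$ and in $[0,R^{r}]$ on $(0,\infty)$, we have $u_{0}\in Lip$; moreover $w$ is continuous on $\R_{+}\times\R$, $w(0,\cdot)=u_{0}$ and $w(t,0)=-tA=u_{0}(0)-tA$. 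Repeating the argument of the first assertion --- integrating the conservation law over $(0,x)$ and using the strong traces $\rho(t,0^{\pm})=q^{\pm}$ together with $H^{l}(q^{-})=H^{r}(q^{+})=A$ --- gives $\partial_{t}w=-H(x,\partial_{x}w)$ a.e.\ on $\{x\neq0\}$, so that $w$ is the viscosity solution of the Hamilton--Jacobi equation on each half-plane and hence a viscosity solution of the boundary value problem \eqref{eq.HJbis}--\eqref{eq.boundarycond}. Since that problem has a unique viscosity solution (by the arguments of \cite{IM17}; it is \eqref{eq.boundarypbCC} with a general Lipschitz datum), $w=u$, and therefore $\rho=\partial_{x}w=\partial_{x}u$. (One could instead reduce to the compact-support case of the first two parts via the finite speed of propagation of $S$ and of the boundary value problem.)

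The step that needs genuine care is the Rankine--Hugoniot matching $H^{l}(\rho(t,0^{-}))=H^{r}(\rho(t,0^{+}))$: it is exactly what makes $\int_{0}^{x}\rho(t,y)\,dy$ carry the correct time-drift on the side $\{x>0\}$ --- equivalently, what makes the $1$-form $\rho\,dx-H(x,\rho)\,dt$ closed across $\{x=0\}$, so that it has a single-valued primitive --- and without it the primitive fails to solve the Hamilton--Jacobi equation on $\{x>0\}$. The other ingredient, the upgrade from an a.e.\ solution of the Hamilton--Jacobi equation to a genuine viscosity solution (where the entropy property of $\rho$ enters), I would quote from \cite{CFGM,FIM24} rather than reprove.
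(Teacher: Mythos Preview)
Your argument is correct and differs from the paper's in two of the three parts.

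For the viscosity property away from $x=0$, the paper does not compute $\partial_t u$ directly. It fixes $(t_0,x_0)$ with $x_0\neq 0$, sets $w(s,\cdot):=\bar S^{HJ}(s,u(t_0,\cdot))$, uses that $\partial_x w$ is an entropy solution (citing \cite{CFM24} for a local version) to get $\partial_x w=\rho=\partial_x u$ in a triangle $\Delta$ not meeting $\{x=0\}$ by local uniqueness of entropy solutions, and then shows $u_t=w_t$ in $\Delta$ by testing the conservation law against $\tilde\varphi(s,y):=\int_y^\infty\varphi(s,z)\,dz$; this forces $u=w$ in $\Delta$, hence $u$ is a viscosity solution there. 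Your route---compute $\partial_t u$ from the conservation law on $(-\infty,x)$ and then invoke the entropy/viscosity correspondence---is shorter but outsources the key upgrade. Both arguments, when $x_0>0$, implicitly rely on Rankine--Hugoniot at $x=0$ (in the paper's version because the support of $\tilde\varphi$ crosses $\{x=0\}$); you are right to flag this explicitly and to note that it is supplied by Lemma~\ref{lem.RH} when $\rho=S(\rho_0)$, which is the only case the lemma is ever applied to.

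For the boundary condition your argument is the same as the paper's.

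For the last assertion the paper proceeds by approximation: it sets $\rho_{0,M}:=\rho_0\mathbf 1_{[-M,M]}$, applies the first two parts to $\rho_M:=S(\rho_{0,M})$ (which inherits \eqref{hyp.ctbd} on any bounded time interval by finite speed of propagation), recenters $\tilde u_M:=u_M-u_M(0,0)$, and passes to the limit via continuity of $S$ and compactness of the $\tilde u_M$. Your direct construction $w(t,x):=-tA+\int_0^x\rho(t,y)\,dy$ is a genuine simplification: because the drift $-tA$ is chosen to match $H^l(q^-)=H^r(q^+)$, the computation of $\partial_t w$ on each half-line uses only the conservation law on that side together with the trace there, so no limiting procedure is needed. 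One small remark: uniqueness for \eqref{eq.HJbis}--\eqref{eq.boundarycond} is a Dirichlet problem on each half-line rather than the flux-limited problem of \cite{IM17}; it is the same uniqueness the paper invokes in Step~1 of Theorem~\ref{thm.main1}.
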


\begin{proof} The fact that $u$ is a viscosity solution in $(0,\infty)\times (\R\backslash\{0\})$ comes from assumption (H5): Indeed, fix $t>0$ and $x\neq 0$. Let $w(s,\cdot)=\bar S^{HJ}(s, u( t, \cdot))$. Then it is known (see, for instance, \cite{CFM24} for a local version of the statement) that $\partial_x w$ is an entropy solution of the CL with initial condition $\partial_x u( t, \cdot)= \rho( t, \cdot)$. As, by (H5), $\rho$ is also an entropy solution in the triangle 
$$
\Delta:= \{(s,y),\; s\in (t, t-C_0^{-1}|x|), \; y\in (x-|x|+C_0(s- t), x+|x|-C_0(s- t))\},
$$
we infer by the uniqueness of the entropy solution that   $\partial_x w(s,\cdot)= \rho(s, \cdot)= \partial_x u(s,\cdot)$ in $\Delta$. On the other hand, for any test function $\varphi$ with a compact support in $\Delta$, we have, by the definition of $u$ and since $w$ is a solution of the HJ equation while $\rho$ solves the CL,  
$$
\iint_{\Delta} u  \varphi_t = \iint_{\Delta} \rho  \tilde \varphi_t =   - \iint_{\Delta} H(x,\rho) \tilde \varphi_x = \iint_{\Delta} H(x,\rho)  \varphi =
  \iint_{\Delta} H(x,\partial_x w) \varphi = - \iint_{\Delta} w_t \varphi. 
$$
where we have set $\tilde \varphi(s,y)= \int_y^\infty \varphi(s,x)dx.$
This proves that $u_t= w_t$ in $\Delta$, and thus that $u=w$ in $\Delta$ since $u(t,\cdot)= w(t, \cdot)$. Hence $u$ is a viscosity solution to the HJ equation away from $x=0$. In addition, by the continuity of $t\to \rho(t)$ in $L^1_{loc}$, $u$ is continuous up to time $t=0$ and $u(0,x)= u_0(x)$. 

Let us now check the boundary condition at $x=0$ under the addition assumption \eqref{hyp.ctbd}. Let $\psi:\R\to [0,1]$ be a smooth nonnegative test function equal to $1$ on $\R\backslash(-2,2)$ and vanishing on $[-1,1]$.  For $\ep>0$, we set $\psi_\ep(x)= \psi(x/\ep)$. As $\rho$ is a weak solution to the CL in $\R_+\times (-\infty,0)$, we have 
$$
\int_{-\infty}^0 \rho(t)\psi_\ep - \int_0^t\int_{-\infty}^0 H(x,\rho)(\psi_\ep)_x = \int_{-\infty}^0 \rho(0)\psi_\ep
$$
We let $\ep\to 0$. Then $\int_{-\infty}^0 \rho(t)\psi_\ep$ tends to $u(t,0)$, $\int_{-\infty}^0 \rho(0)\psi_\ep$ tends to $u_0(0)$ while $\int_0^t\int_{-\infty}^0 H(x,\rho)(\psi_\ep)_x$ tends to $-\int_0^t H(x,\rho(s, 0^-))ds$. Assumption \eqref{hyp.ctbd} then implies the boundary condition $u(t,0) = - t A +u_0(0)$ for $t\geq0$. 

For the last part of the proof, we  proceed by approximation: given $M>0$, let $\rho_{0,M}= \rho_0{\bf 1}_{[-M,M]}$, $\rho_M= S(\rho_{0,M})$ and let $u_M$ be built as above. Note that $\rho_M$ tends to $\rho$ in $L^1_{loc}$ as $M\to \infty$ by the continuity of the semi-group. Then, for any $T>0$ and  $M$ large enough, $\rho_M$ satisfies \eqref{hyp.ctbd} on $[0,T]$, so that $\tilde u_M:=u_M-u_{M}(0,0)$ solves \eqref{eq.HJbis}-\eqref{eq.boundarycond} on the time interval $[0,T]$ and satisfies $\rho_M=\partial_x \tilde u_M$. By compactness, $(\tilde u_M)$ converges,  as $M\to \infty$ and up to a subsequence, to some solution $u$ with the required properties on the time interval $[0,\infty)$ since $T$ was arbitrary. 
\end{proof}

The next remark is about blow-up limits of solutions at points $(t,0)$. It is a central consequence of the existence of strong traces for  solutions of CL  in the sense of Vasseur \cite{V01} and Panov \cite{Ps07} and of strong trace for the gradient of solution of HJ equation in the sense of Monneau \cite{Mon23}. 

\begin{Lemma}\label{lem.statsol} Let $\rho_0\in L^\infty$ and, for $\bar t>0$ and $\ep>0$, set $\rho^\ep_{\bar t}(t,x)= S(\bar t+\ep t,\rho_0)(\ep x)$. Then for a.e. $\bar t>0$ and as $\ep\to 0^+$, the sequence $\rho_{\bar t}^\ep$ converges in $L^1_{loc}$  to a constant (in time) function  $\bar \rho_{\bar t}(x)=q^-_{\bar t}{\bf 1}_{\{x<0\}}+q^+_{\bar t}{\bf 1}_{\{x>0\}}$, where $(q^-_{\bar t},q^+_{\bar t})= (S(\bar t,\rho_0)(0^-),S(\bar t,\rho_0)(0^+))$, which is a stationary solution for $S$: $S(t, \bar \rho_{\bar t})= \bar \rho_{\bar t}$ for any $t\geq 0$. 
\end{Lemma}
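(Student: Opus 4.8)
The plan is to exploit self-similarity together with the existence of strong traces, reducing the blow-up analysis to the HJ setting via Lemma \ref{lem.HJvsCL}, where stationary solutions are already well understood.

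\smallskip

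\textbf{Step 1: Scaling identity.} First I would record that by the scale invariance (H6), $S(\bar t+\ep t,\rho_0)(\ep x) = S(\ep^{-1}(\bar t+\ep t), \rho_0(\ep^{-1}\cdot))(x)$ — actually more directly, set $v^\ep(t,x) := S(\bar t+\ep t, \rho_0)(\ep x)$ and observe that for fixed $\ep$, by the semi-group property (H1), $v^\ep(t,x) = S(\ep t, S(\bar t, \rho_0))(\ep x)$. Hence it suffices to understand the rescaling of the single profile $w_0 := S(\bar t,\rho_0) \in L^\infty$, i.e. the family $\rho^\ep(t,x) := S(\ep t, w_0)(\ep x)$; by (H6) this equals $S(t, w_0(\ep^{-1}\cdot))(x)$. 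As $\ep \to 0^+$, the initial datum $w_0(\ep^{-1}\cdot)$ converges in $L^1_{loc}$ to the two-level step $\bar\rho(x) = q^- {\bf 1}_{x<0} + q^+ {\bf 1}_{x>0}$ provided $x=0$ is a Lebesgue point of $w_0$ from each side — and this is precisely where strong traces enter: by \cite{V01,Ps07} the entropy solution $w_0 = S(\bar t,\rho_0)$ (which is an entropy solution away from $x=0$) has strong one-sided traces $q^\pm_{\bar t}:=(S(\bar t,\rho_0)(0^\mp))$... more carefully, one needs $\bar t$ to be a point where these traces are "good", which holds for a.e. $\bar t$ since the trace map $t\mapsto(\rho(t,0^-),\rho(t,0^+))$ is defined a.e.; alternatively one invokes the HJ strong-gradient-trace result of Monneau \cite{Mon23} after passing through Lemma \ref{lem.HJvsCL}.

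\smallskip

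\textbf{Step 2: Passing to the limit.} By continuity of the semi-group (H1) in the $L^1_{loc}$ topology, $\rho^\ep(t,\cdot) = S(t, w_0(\ep^{-1}\cdot))$ converges in $L^1_{loc}$, uniformly on compact time intervals, to $S(t,\bar\rho)$. So I must show $S(t,\bar\rho) = \bar\rho$ for all $t$, i.e. that the two-level step is stationary. The key point: the limit must be independent of time. For this I would use the other scaling: the limit $\tilde\rho(t,x):=S(t,\bar\rho)(x)$ inherits self-similarity — since $\bar\rho(\lambda^{-1}\cdot) = \bar\rho$ for all $\lambda>0$, applying (H6) gives $S(\lambda t, \bar\rho)(\lambda x) = S(t, \bar\rho)(x)$, so $\tilde\rho(\lambda t,\lambda x)=\tilde\rho(t,x)$; $\tilde\rho$ is homogeneous of degree $0$.

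\smallskip

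\textbf{Step 3: Homogeneous degree-zero entropy solutions are constant.} Now I would argue that a self-similar (degree-zero-homogeneous) entropy solution of the CL in $\R_+\times(\R\setminus\{0\})$ with the Rankine–Hugoniot condition at $x=0$ (Lemma \ref{lem.RH}) and with constant one-sided traces $q^\pm$ at $x=0$ must be constant in $x$ on each side, hence equal to $\bar\rho$. The cleanest route is through Lemma \ref{lem.HJvsCL}: $\tilde\rho$ satisfies \eqref{hyp.ctbd} (its traces at $x=0$ are $q^\pm$ by the strong-trace stability under the rescaling), so $\tilde\rho = \partial_x u$ where $u$ solves the boundary value problem \eqref{eq.HJbis}-\eqref{eq.boundarycond} with $A = H^l(q^-)=H^r(q^+)$ and initial datum $u_0(x) = q^- x {\bf 1}_{x<0} + q^+ x {\bf 1}_{x>0}$. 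But $u$ is then homogeneous of degree $1$ and a viscosity solution of the HJ equation on each side with the explicit affine boundary trace $-tA + u_0(0)$; the comparison/uniqueness for such boundary value problems (as in Step 1 of the proof of Theorem \ref{thm.main1}, the boundary value problem \eqref{eq.boundarypbCC}) forces $u(t,x) = u_0(x) - tA$ — one checks this candidate is the unique viscosity solution using the rarefaction structure of the concave Hamiltonians exactly as in the computation of $\bar S(t,\hat\phi_0)$ — and differentiating gives $\tilde\rho(t,x) = \bar\rho(x)$. Therefore $S(t,\bar\rho) = \bar\rho$ for all $t\geq 0$, and combining with Step 2 identifies the blow-up limit of $\rho^\ep_{\bar t}$ as $\bar\rho_{\bar t}$, which is stationary.

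\smallskip

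\textbf{Main obstacle.} I expect the main difficulty to be Step 1 — making rigorous, for \emph{a.e.} $\bar t$, that the rescaled data converge to a clean two-level step with the advertised values $(S(\bar t,\rho_0)(0^-),S(\bar t,\rho_0)(0^+))$. This is exactly the content of the strong-trace theorems (\cite{V01,Ps07} for CL, \cite{Mon23} for HJ gradients), but one has to be careful that the blow-up is taken at a point $(\bar t,0)$ rather than along a hyperplane $\{t=\bar t\}$ or $\{x=0\}$, i.e. a genuine parabolic/anisotropic blow-up in both variables; the degree-zero homogeneity of the limit is what reconciles the two one-dimensional trace statements. Everything else — the semi-group continuity passage in Step 2 and the HJ uniqueness in Step 3 — is either an assumption (H1) or a repetition of computations already carried out in the proof of Theorem \ref{thm.main1}.
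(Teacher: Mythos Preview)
Your Step 3 contains a genuine gap. You try to show directly that the Riemann solution $\tilde\rho=S(\cdot,\bar\rho)$ with $\bar\rho=q^-{\bf 1}_{x<0}+q^+{\bf 1}_{x>0}$ is stationary, but this is false for arbitrary $(q^-,q^+)$: Lemma~\ref{lem.psicannot} (proved afterwards) shows precisely that $\hat\psi_\alpha$ is \emph{not} stationary when $\alpha\neq\bar A$. Your route via Lemma~\ref{lem.HJvsCL} is circular: to apply it you must know the trace $(\tilde\rho(t,0^-),\tilde\rho(t,0^+))$, which by homogeneity is some constant pair $(p^-,p^+)$, but nothing forces $(p^-,p^+)=(q^-,q^+)$; your candidate $u(t,x)=u_0(x)-tA$ with $u_0(x)=q^-x{\bf 1}_{x<0}+q^+x{\bf 1}_{x>0}$ solves the HJ equation on each half-line only when $H^l(q^-)=H^r(q^+)=A$, whereas the boundary value coming from Lemma~\ref{lem.HJvsCL} is $-tH^l(p^-)$. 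The equality $H^l(p^-)=H^l(q^-)$ is exactly what you are trying to prove. What singles out pairs arising as traces of an actual solution, as opposed to arbitrary pairs, is a genuine two-variable blow-up statement, and degree-zero homogeneity alone does not supply it.

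The paper takes a different route that avoids this. It passes to the antiderivative $u(t,x)=\int_{-\infty}^x\rho$ and invokes Monneau's strong-gradient-trace theorem \cite{Mon23} at the HJ level: for a.e.\ $\bar t$ (differentiability points of $t\mapsto u(t,0)$), the rescalings $u^\ep(t,x)=\ep^{-1}(u(\bar t+\ep t,\ep x)-u(\bar t,0))$ converge locally uniformly on each half-space to the \emph{affine} function $tq_t+xq^-{\bf 1}_{x<0}+xq^+{\bf 1}_{x>0}$. This directly identifies the limit as constant in time; BV compactness (from uniform concavity of the flux) yields subsequential $L^1_{loc}$ convergence of $\rho^\ep_{\bar t}=\partial_x u^\ep$, and agreement with the weak-$*$ limit $\partial_x u^0$ pins the full sequence. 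Stationarity of $\bar\rho_{\bar t}$ for $S$ then follows from continuity of $S$, and only at the very end is $(q^-_{\bar t},q^+_{\bar t})$ matched to the Vasseur--Panov trace by a change-of-variables argument. A secondary issue in your Step~1: the rescaled initial datum should be $w_0(\ep\,\cdot)$ (zooming in), not $w_0(\cdot/\ep)$; and even with the correct direction, the convergence $w_0(\ep\,\cdot)\to\bar\rho$ in $L^1_{loc}$ is a \emph{spatial} Lebesgue-point property of the time slice $\rho(\bar t,\cdot)$ at $x=0$, which is not what the Vasseur--Panov trace (an $L^1$-in-time statement as $x\to 0^\pm$) provides.
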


\begin{proof} As the result holds in a neighborhood of $x=0$, we can assume without loss of generality that $\rho_0$ has a compact support. Let us set $u(t,x):= \int_{-\infty}^x \rho(t,y)dy$. We know from Lemma \ref{lem.HJvsCL} that $u$ is a viscosity solution to the HJ equation in $\R_+\times (\R\backslash \{0\})$. Let $\bar t$ be such that the Lipschitz function $u(t,0)$ has a derivative at $\bar t$. Following  \cite{Mon23}, $u$ then admits a full half-derivative at $(\bar t,0^\pm)$: namely, there exists $q^\pm$ and $q_t\in \R$ such that
$$
u^\ep(t,x)=\ep^{-1}\left( u(\bar t+\ep t, \ep x)- u(\bar t,0)\right) \to u^0(t,x):=tq_t + x q^-{\bf 1}_{x<0}+ x q^+{\bf 1}_{x>0}\qquad \text{as $\ep\to 0^+$,}
$$
the convergence being locally uniform in $\R\times (-\infty,0)$ and in $\R\times (0,\infty)$. Let now $\rho^\ep_{\bar t}(t,x)= S(\bar t+\ep t,\rho_0)(\ep x)$. By the semi-group and the scaling properties, $\rho^\ep$ is a solution for $S$: $S(t, \rho^\ep_{\bar t}(0, \cdot))= \rho^\ep_{\bar t}(t,\cdot)$. As $\rho^\ep$ is a bounded solution to the CL with a uniformly concave flux $H$ in $\R_+\times (\R\backslash\{0\})$, the family $(\rho^\ep)$ is bounded in $BV_{loc}(\R_+\times (\R\backslash\{0\}))$ and therefore relatively compact in $L^1_{loc}(\R_+\times \R)$. Let $(\ep_n)$ be such that $(\rho^{\ep_n})$ converges to some $\bar \rho$ in $L^1_{loc}$. We claim that 
\be\label{lqeisundf}
\bar \rho(t,x)= q^-{\bf 1}_{x<0}+ q^+{\bf 1}_{x>0}.
\ee
Note that the claim implies the convergence of the whole sequence $(\rho^\ep)$ to $\bar \rho$. To prove the claim we note that  $\partial_x u^\ep= \rho^\ep$. As $u^\ep$ converges to $u^0$, we infer that $\rho^\ep=\partial_x u^\ep$ converges to $\partial_x u^0= q^-{\bf 1}_{x<0}+ q^+{\bf 1}_{x>0}$ in $L^\infty-$weak-*. But  $\rho^\ep$ converges to $\bar \rho$ in $L^1_{loc}$, so that \eqref{lqeisundf} holds. As $\rho^\ep$ is a solution for $S$, $\bar \rho$ is also a solution for $S$. From now on we set $(q^-_{\bar t}, q^+_{\bar t})= (q^-,q^+)$ to emphasize the dependence in $\bar t$. Recall that $(q^-_{\bar t}, q^+_{\bar t})$ exists for a.e. $t\geq 0$. 

Let us finally check that  $(q^-_{\bar t},q^+_{\bar t})= (S(\bar t,\rho_0)(0^-),S(\bar t,\rho_0)(0^+))$ for a.e. $\bar t$. Set $\rho:= S(\cdot, \rho_0)$. By the strong trace property (see \cite{Ps07,V01}), for any $b>0$,  
$$
\text{ess-lim}_{x\to 0^+} \int_0^b |\rho(t,x)-\rho(t,0^+)|dt = 0. 
$$
Hence (by a simple change of variable)
$$
\lim_{\ep \to 0} \int_0^1\int_0^b \int_0^1  |\rho(t+\ep s,\ep x)-\rho(t+\ep s,0^+)|dx dt ds = 0. 
$$
By the $L^1$ convergence of $\rho^\ep_{\bar t}$ for a.e. $\bar t$ to $q^+_{\bar t}$ in $\R\times (0,\infty)$ and the $L^1$ convergence of $\rho(\cdot+\ep s,0^+)$ to $\rho(\cdot,0^+)$, we infer that 
$$
\int_0^1\int_0^b \int_0^1  |q^+_{t}-\rho(t,0^+)|dx dt ds = 0, 
$$
from which we conclude that $q^+_{t}= \rho(t,0^+)$ for a.e. $ t$. The proof of the a.e. equality $q^+_{ t}= S(\bar t,\rho_0)(0^+)$ is symmetric. 
\end{proof}

Let us now introduce a few notation. Given $A\in [0, A_{\max}]$, let $p^{r/l,-}_A$ and $p^{r/l,+}_A$ be respectively the smallest and the largest solutions to $H^{r/l}(p)=A$.  We define the maps 
\be\label{def.psipsipsi}
\check \psi_A(x)= p^{l,-}_A {\bf 1}_{x\leq 0}+ p^{r,+}_A  {\bf 1}_{x\geq 0}, \qquad \hat \psi_A(x)= p^{l,+}_A  {\bf 1}_{x\leq 0}+ p^{r,-}_A  {\bf 1}_{x\geq 0} . 
\ee

\begin{Lemma}\label{lem.psicannot} Assume that there exists $\bar A\in [0, A_{\max}]$ such that $\hat \psi_{\bar A}$ is a stationary solution for $S$: $S(t, \hat \psi_{\bar A})= \hat \psi_{\bar A}$ for any $t\geq 0$. Then, for any $\alpha\in [0,A_{\max}]\backslash\{\bar A\}$ and any $\beta \in (\bar A, A_{\max}]$, $\hat \psi_\alpha$ and  $\check \psi_\beta$ are not  stationary solutions for $S$. 
 \end{Lemma}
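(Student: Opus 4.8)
The plan is to show that if two of these "step" profiles were simultaneously stationary, the corresponding Rankine–Hugoniot fluxes would collide with the comparison principle, producing a contradiction. Recall from Lemma~\ref{lem.RH} that any profile $\bar\rho = q^-{\bf 1}_{x<0}+q^+{\bf 1}_{x>0}$ which is stationary for $S$ satisfies $H^l(q^-)=H^r(q^+)$; in particular $\hat\psi_A$, $\check\psi_A$ all satisfy this relation by construction (both one-sided traces have flux value $A$). The key monotonicity input is that $p^{l,+}_A$ is \emph{decreasing} in $A$ while $p^{l,-}_A$ is \emph{increasing}, and symmetrically $p^{r,-}_A$ is \emph{increasing} and $p^{r,+}_A$ is \emph{decreasing} (because $H^{l},H^{r}$ are strictly concave with $H^{l/r}(0)=H^{l/r}(R^{l/r})=0$). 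Thus for $\alpha$ on the "left" side of the $H^l$-arc relative to $\bar A$ one gets an ordering $\hat\psi_\alpha \ge \hat\psi_{\bar A}$ or $\hat\psi_\alpha\le\hat\psi_{\bar A}$ on $(-\infty,0)$, and an ordering on $(0,\infty)$ whose direction depends on which branch $\alpha$ sits on.

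First I would dispose of $\check\psi_\beta$ for $\beta\in(\bar A,A_{\max}]$. Since $\beta>\bar A$, monotonicity gives $p^{l,-}_\beta > p^{l,-}_{\bar A}$ and $p^{r,+}_\beta < p^{r,+}_{\bar A}$; but $p^{l,-}_{\bar A}\le p^{l,+}_{\bar A}$ and $p^{r,-}_{\bar A}\le p^{r,+}_{\bar A}$, and one checks that in fact $\check\psi_\beta \le \hat\psi_{\bar A}$ on $(0,\infty)$ while $\check\psi_\beta \ge \hat\psi_{\bar A}$ on $(-\infty,0)$ — or better, one produces a genuine ordered comparison on one full half-line by comparing with the other extremal profile. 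The clean route is: use Lemma~\ref{lem.HJvsCL} to pass to the HJ primitives. If $\check\psi_\beta$ were stationary, then $U_\beta(x):=\int_{-\infty}^x(\check\psi_\beta - \hat\psi_{\bar A})$ would be a primitive of a stationary CL solution, and the associated HJ functions $u_\beta(t,x)$, $u_{\bar A}(t,x)$ would both be self-similar viscosity solutions of boundary value problems \eqref{eq.HJbis}–\eqref{eq.boundarycond} with flux constants $\beta$ and $\bar A$ respectively. Comparing the initial data ($\check\psi_\beta$ vs $\hat\psi_{\bar A}$ as gradients, i.e. the primitives differ by a monotone function on each half-line) and invoking the $L^\infty$-comparison principle behind (H2), one forces $\beta = \bar A$, contradicting $\beta>\bar A$.

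Then I would treat $\hat\psi_\alpha$, $\alpha\ne\bar A$. Split into $\alpha<\bar A$ and $\alpha>\bar A$. For $\alpha<\bar A$: monotonicity gives $p^{l,+}_\alpha > p^{l,+}_{\bar A}$ and $p^{r,-}_\alpha < p^{r,-}_{\bar A}$, so $\hat\psi_\alpha \ge \hat\psi_{\bar A}$ on $(-\infty,0)$ and $\hat\psi_\alpha\le\hat\psi_{\bar A}$ on $(0,\infty)$; the primitives $v(x)=\int_{-\infty}^x(\hat\psi_\alpha-\hat\psi_{\bar A})$ is then increasing on $(-\infty,0)$ and decreasing on $(0,\infty)$, hence bounded, and one can run the HJ comparison exactly as above: both $\hat\psi_\alpha$ and $\hat\psi_{\bar A}$ stationary $\Rightarrow$ their HJ primitives solve boundary value problems with constants $\alpha$ and $\bar A$, and the $L^\infty$ contraction/comparison (applied to the bounded difference of primitives, after normalizing $u(0)=0$) forces $\alpha=\bar A$. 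For $\alpha>\bar A$ the inequalities reverse and the same argument applies. I would carry this out once in a lemma-style sub-step ("if $\hat\psi_{A_1}$ and $\hat\psi_{A_2}$ are both stationary with $A_1\ne A_2$ then the primitive of the difference is bounded on $\R$, and comparison forces $u(t,0)=-tA_1=-tA_2$, contradiction"), then apply it to each case.

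The main obstacle is the boundedness of the primitive of the difference of two step profiles: $\int_{-\infty}^x(\psi_1-\psi_2)$ is bounded on $\R$ only if $\psi_1-\psi_2$ has the right sign pattern — positive on one side of $0$ and negative on the other, or identically zero — so that the integral is a bump rather than a ramp. Verifying that the sign pattern always comes out right requires the precise concavity-driven monotonicity of $p^{l/r,\pm}_A$ in $A$ and a small case check (which branch $\alpha$ lies on for $\hat\psi_\alpha$). Once that is in hand, the contradiction is immediate: a bounded, nonzero difference of initial primitives is propagated by $L^\infty$-comparison, but the two boundary values $-tA_1$ and $-tA_2$ diverge linearly in $t$, violating the $\|\cdot\|_\infty$ bound from (H2) for $t$ large. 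The remaining steps — that stationarity of a step profile for $S$ implies the primitive solves the boundary value problem, that such solutions are self-similar, that comparison of primitives holds — are all already provided by Lemmas~\ref{lem.RH} and~\ref{lem.HJvsCL} and assumptions (H2)–(H3).
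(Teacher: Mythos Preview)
Your proposal has a genuine gap at its central step. You claim that the primitive $v(x)=\int_{-\infty}^x(\hat\psi_\alpha-\hat\psi_{\bar A})$ is bounded because the integrand is positive on one half-line and negative on the other, ``a bump rather than a ramp.'' But $\hat\psi_\alpha-\hat\psi_{\bar A}$ equals the nonzero constant $p^{l,+}_\alpha-p^{l,+}_{\bar A}$ on all of $(-\infty,0)$, so $\int_{-\infty}^x$ does not even converge; if you integrate from $0$ instead, the primitive is piecewise linear with nonzero slopes on both sides and blows up at $\pm\infty$. The difference of primitives $\hat\phi_\alpha-\hat\phi_{\bar A}$ is never bounded for $\alpha\ne\bar A$, so the $L^\infty$ comparison you want to run (bounded difference of initial primitives versus linearly diverging boundary values $-t\alpha$ vs $-t\bar A$) cannot get started. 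A second, related issue: in this section only the CL semigroup $S=S^{CL}$ with its $L^1$ contraction (H2) is available---there is no HJ semigroup with an $L^\infty$ contraction to invoke---and the $L^1$ contraction is unusable here because the step profiles $\hat\psi_A$ are not in $L^1(\R)$.

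The paper's argument avoids this by constructing an explicit time-dependent solution rather than comparing two stationary ones. Assuming $\alpha<\beta$ and both $\hat\psi_\alpha,\hat\psi_\beta$ stationary, set $u^\ep(t,x)=\max\{\hat\phi_\alpha(x)-\alpha(t+\ep),\,\hat\phi_\beta(x)-\beta(t+\ep)\}$. This is a viscosity solution of HJ away from $x=0$ (maximum of two classical solutions, concave Hamiltonian), so $\rho^\ep=\partial_x u^\ep$ is an entropy solution of the CL in $\R_+\times(\R\backslash\{0\})$. The $\ep$-shift forces the $\alpha$-branch to dominate in a fixed neighbourhood of $x=0$ for all $t\ge 0$, so $\rho^\ep=\hat\psi_\alpha$ there; since $\hat\psi_\alpha$ is stationary for $S$, finite speed of propagation (H4) glues this to conclude that $\rho^\ep$ is a solution for $S$. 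Letting $\ep\to 0$ (continuity of $S$), the limit $\rho$ has initial datum $\hat\psi_\beta$ but equals $\hat\psi_\alpha$ near $x=0$ for $t>0$, contradicting stationarity of $\hat\psi_\beta$. The $\check\psi_\beta$ case is identical with $\hat\psi_{\bar A}$ in place of $\hat\psi_\alpha$. The key idea you are missing is this explicit max-of-affine construction, which replaces the unavailable global comparison by a local gluing through (H4).
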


\begin{proof} We start with a preliminary remark. Fix $\alpha, \beta \in [0, A_{\max}]$ with $\alpha\neq \beta$. We claim that $\hat \psi_\alpha$ and $\hat \psi_\beta$ cannot be two stationary solutions for $S$. To fix the ideas, we suppose that $\alpha<\beta$ and argue by contradiction, assuming that $\hat \psi_\alpha$ and $\hat \psi_\beta$ are two stationary solutions for $S$.

For $A\in [0,A_{\max}]$, let us set $\hat \phi_A(x)= p^{l,+}_Ax{\bf 1}_{x<0}+p^{r,-}_Ax{\bf 1}_{x>0}$.  Note that $\hat \psi_A=\partial_x \hat \phi_A$.  For $\ep>0$, let us set
$$
u^\ep(t,x) = \max \{ \hat \phi_\alpha(x)-\alpha(t+\ep) , \hat \phi_\beta(x)-\beta(t+\ep)\}.
$$
Note that $u^\ep$ is a viscosity solution to the HJ equation in $(0,\infty)\times (\R\backslash\{0\})$ as the maximum of two solutions, the Hamiltonian being concave. Hence $\rho^\ep=\partial_x u^\ep$ is an entropic solution of the CL in  $(0,\infty)\times (\R\backslash\{0\})$. On the other hand, as $\alpha <\beta $,  there exists $\eta>0$ such that $u^\ep(t,x)=\hat \phi_\alpha(x)-\alpha(t+\ep)$ in  $\R_+\times (-\eta, \eta)$. Thus $\rho^\ep= \hat \psi_\alpha$ in  $(0, \infty)\times (-\eta, \eta)$. As $ \hat \psi_\alpha$ is a solution for $S$, while $\rho^\ep$ is a solution to the CL in $(0,\infty)\times (\R\backslash\{0\})$,  $\rho^\ep$ is also a solution for $S$ thanks to the finite speed of propagation property. Let us set $\rho:= \partial_x u$  with 
$$
u(t,x) = \max \{ \hat \phi_\alpha(x)-\alpha t , \hat \phi_{\beta}(x)-\beta t\}.
$$
Then $\rho^\ep$ tends to $\rho$ in $L^1_{loc}$, which implies that $\rho$ is also a solution for $S$ by continuity of the semi-group. As $\rho(0)=  \hat \psi_\beta$, we have therefore $S(t, \hat \psi_\beta)= \rho$. On the other hand, $\rho(t)$ equals $\hat \psi_\alpha$ in a neighborhood of $x=0$ for $t>0$: this shows that $ \hat \phi_\beta$ cannot be a stationary solution for $S$ and proves the claim. 

We now come back to the proof of the Lemma. As by assumption $\hat \psi_{\bar A}$ is a stationary solution for $S$, the claim above shows that $\hat \phi_\alpha$ cannot be a stationary solution for $S$ for any $\alpha\in [0,A_{\max}]\backslash\{\bar A\}$. 

Let now $\beta \in (\bar A, A_{\max}]$. Replacing $\hat \psi_\alpha$ by $\hat \psi_{\bar A}$ and $\hat \psi_\beta$ by $\check \psi_\beta$ in the proof of the claim, we can show exactly in the same way as above that $\check \psi_\beta$  cannot be a stationary solution for $S$. 
\end{proof}

We are now ready to build the flux limiter $\bar A$. Let us set $q^\pm=S(1,\hat \psi_{A_{\max}})(0^\pm)$, where $\hat \psi_{A_{\max}}$ is defined in \eqref{def.psipsipsi}. By Lemma  \ref{lem.RH}, $H^l(q^-)=H^r(q^+)$. We set $\bar A= H^l(q^-)=H^r(q^+)$. 

\begin{Lemma} \label{lem.pcannot}  We have $(q^-,q^+)=(p^{l,+}_{\bar A}, p^{r,-}_{\bar A})$ and $\hat \psi_{\bar A}$ is a stationary solution for $S$. Moreover, for any $\alpha\in (\bar A, A_{\max}]$, the maps $x\to p^{l,+}_\alpha{\bf 1}_{x<0}+ p^{l,+}_\alpha{\bf 1}_{x>0}$ and 
$x\to p^{l,-}_\alpha{\bf 1}_{x<0}+ p^{r,-}_\alpha{\bf 1}_{x>0}$ cannot be a stationary solution for $S$.   
\end{Lemma}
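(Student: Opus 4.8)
The statement has three parts: first, that $(q^-,q^+) = (p^{l,+}_{\bar A}, p^{r,-}_{\bar A})$; second, that $\hat\psi_{\bar A}$ is a stationary solution for $S$; and third, that the two specific Riemann-type data at levels $\alpha > \bar A$ cannot be stationary. The third part is an immediate consequence of the second part together with Lemma~\ref{lem.psicannot}, once we identify the map $x\mapsto p^{l,+}_\alpha{\bf 1}_{x<0}+p^{l,+}_\alpha{\bf 1}_{x>0}$ with $\hat\psi_\alpha$ and $x\mapsto p^{l,-}_\alpha{\bf 1}_{x<0}+p^{r,-}_\alpha{\bf 1}_{x>0}$ with $\check\psi_\alpha$ (the first of these identifications requires $R^l=R^r$ in general, or rather the correct reading is that these are exactly $\hat\psi_\alpha$ and $\check\psi_\alpha$ from \eqref{def.psipsipsi} — I would simply invoke Lemma~\ref{lem.psicannot} with $\bar A$ as given and $\alpha$ in the range $(\bar A, A_{\max}]$). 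So the real work is in the first two parts.

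**Identifying $(q^-,q^+)$ and stationarity.** The key tool is Lemma~\ref{lem.statsol}: starting from $\rho_0 = \hat\psi_{A_{\max}}$, for a.e.\ $\bar t>0$ the blow-up $\rho^\ep_{\bar t}$ converges to the constant-in-time function $\bar\rho_{\bar t}(x) = q^-_{\bar t}{\bf 1}_{x<0} + q^+_{\bar t}{\bf 1}_{x>0}$ where $(q^-_{\bar t},q^+_{\bar t}) = (S(\bar t,\rho_0)(0^-),S(\bar t,\rho_0)(0^+))$, and this $\bar\rho_{\bar t}$ is a \emph{stationary} solution for $S$. I would first argue that the pair $(S(\bar t,\hat\psi_{A_{\max}})(0^-), S(\bar t,\hat\psi_{A_{\max}})(0^+))$ is in fact independent of $\bar t$: indeed $\hat\psi_{A_{\max}}$ is scale-invariant ($\hat\psi_{A_{\max}}(\cdot/\ep) = \hat\psi_{A_{\max}}$), so by (H6) $S(t,\hat\psi_{A_{\max}})(x) = S(t/\ep, \hat\psi_{A_{\max}})(x/\ep)$ for all $\ep>0$; taking traces at $x=0^\pm$ gives $S(t,\hat\psi_{A_{\max}})(0^\pm) = S(t/\ep,\hat\psi_{A_{\max}})(0^\pm)$ for all $\ep$, hence the trace is constant in $t$ and equals $q^\pm = S(1,\hat\psi_{A_{\max}})(0^\pm)$. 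Combined with Lemma~\ref{lem.statsol}, the constant function $\bar\rho(x) = q^-{\bf 1}_{x<0}+q^+{\bf 1}_{x>0}$ is a stationary solution for $S$. By Lemma~\ref{lem.RH} applied to this stationary solution, $H^l(q^-) = H^r(q^+) =: \bar A$, which is consistent with the definition.

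**Pinning down which roots $q^\pm$ are.** It remains to show $(q^-,q^+) = (p^{l,+}_{\bar A}, p^{r,-}_{\bar A})$, i.e.\ that $q^-$ is the \emph{largest} root of $H^l(\cdot) = \bar A$ and $q^+$ the \emph{smallest} root of $H^r(\cdot)=\bar A$. I would use the comparison principle (H2)$\equiv$comparison. Since the initial datum is $\hat\psi_{A_{\max}} = p^{l,+}_{A_{\max}}{\bf 1}_{x<0}+p^{r,-}_{A_{\max}}{\bf 1}_{x>0}$, and $p^{l,+}_{A_{\max}}$ is the largest value in $[0,R^l]$ achieving the maximal flux (on the left, this is the point where $H^l$ is maximized, or if $\max H^l > A_{\max}$ it is the larger root of $H^l = A_{\max}$), and similarly $p^{r,-}_{A_{\max}}$ is the smallest such on the right — these are already the "maximal/minimal density" choices. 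I would compare $\hat\psi_{A_{\max}}$ from above and below with the known stationary solutions $\hat\psi_\beta$, $\check\psi_\beta$ of the flux-limited semigroups $S^\beta$ (which are stationary solutions of $S^\beta$, not necessarily of $S$), and more directly argue: the stationary solution $\bar\rho = (q^-,q^+)$ satisfies, being stationary for the \emph{left} conservation law near $x<0$ and a traffic-type (concave) flux, that $q^-$ must be an admissible left trace, i.e.\ $q^- \geq p^{l,+}_{\bar A}$ is forced by the direction of characteristics / the entropy condition at the junction combined with $\rho$ being $\leq \hat\psi_{A_{\max}} \leq$ the maximal density — actually the cleanest route is: since $\hat\psi_{A_{\max}}$ is the pointwise-largest element of $L^\infty$ (it takes the largest admissible left value on $x<0$ and... no). Let me restructure: the cleanest argument is that $\hat\psi_{A_{\max}}$ dominates $\hat\psi_{\bar A}$ if and only if $p^{l,+}_{A_{\max}} \geq p^{l,+}_{\bar A}$ and $p^{r,-}_{A_{\max}} \leq p^{r,-}_{\bar A}$, both of which hold since concavity makes $p^{l,+}_A$ decreasing in $A$ and $p^{r,-}_A$ increasing in $A$ on the relevant ranges; then $\hat\psi_{\bar A}$ (resp.\ the flux-limited solution $S^{\bar A}(t,\hat\psi_{A_{\max}})$ which stays $\hat\psi_{\bar A}$ near the junction) sandwiches the trace and via Lemma~\ref{lem.HJvsCL} and the uniqueness of the boundary-value HJ problem \eqref{eq.boundarypbCC} identifies $\bar\rho = \hat\psi_{\bar A}$ exactly.

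**Main obstacle.** The technical heart is the last identification: showing that the stationary trace $(q^-,q^+)$ is precisely $(p^{l,+}_{\bar A}, p^{r,-}_{\bar A})$ and not one of the other root-pairs at level $\bar A$. I expect to handle this by transferring to the HJ side via Lemma~\ref{lem.HJvsCL}: the primitive $u$ of $\rho = S(\cdot,\hat\psi_{A_{\max}})$ solves a boundary-value HJ problem of the form \eqref{eq.boundarypbCC}, whose unique viscosity solution near $x=0$ is $\hat\phi_{\bar A}(x) - t\bar A$ (using that $\hat\phi_{A_{\max}}$ primitivizes $\hat\psi_{A_{\max}}$ and the structure of Step~1 of the HJ proof); differentiating in $x$ gives $\rho = \hat\psi_{\bar A}$ near the junction, hence $(q^-,q^+) = (p^{l,+}_{\bar A},p^{r,-}_{\bar A})$ and $\hat\psi_{\bar A}$ is stationary for $S$. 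The delicate point is checking that the boundary trace of $u$ is genuinely $-t\bar A$ (so that the uniqueness statement applies), which follows from the stationarity of $\bar\rho$ and Lemma~\ref{lem.HJvsCL}'s boundary-condition conclusion. Once $\hat\psi_{\bar A}$ is stationary, the final sentence of the Lemma is exactly Lemma~\ref{lem.psicannot} with the given $\bar A$ and $\alpha \in (\bar A, A_{\max}]$, noting $p^{l,+}_\alpha{\bf 1}_{x<0}+p^{l,+}_\alpha{\bf 1}_{x>0}$ and $p^{l,-}_\alpha{\bf 1}_{x<0}+p^{r,-}_\alpha{\bf 1}_{x>0}$ are of the form $\hat\psi_\alpha$ and $\check\psi_\alpha$ respectively.
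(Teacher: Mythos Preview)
Your argument for the first two parts is essentially the paper's: use scale invariance of $\hat\psi_{A_{\max}}$ to see that the trace $(\rho(t,0^-),\rho(t,0^+))$ is constant in $t$, apply Lemma~\ref{lem.HJvsCL} to pass to the HJ boundary-value problem with boundary datum $-t\bar A$, identify its unique solution as $\max\{\hat\phi_{A_{\max}}(x)-A_{\max}t,\ \hat\phi_{\bar A}(x)-\bar A t\}$ (the paper refers to Step~3 of the proof of Theorem~\ref{thm.main1} for this), and differentiate near $x=0$ to read off $(q^-,q^+)=(p^{l,+}_{\bar A},p^{r,-}_{\bar A})$; Lemma~\ref{lem.statsol} then yields stationarity of $\hat\psi_{\bar A}$.

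The third part, however, has a genuine gap. You try to identify the two maps in the ``Moreover'' clause with $\hat\psi_\alpha$ and $\check\psi_\alpha$ and then invoke Lemma~\ref{lem.psicannot}. This identification is wrong: by \eqref{def.psipsipsi}, $\hat\psi_\alpha$ corresponds to the pair $(p^{l,+}_\alpha,p^{r,-}_\alpha)$ and $\check\psi_\alpha$ to $(p^{l,-}_\alpha,p^{r,+}_\alpha)$, whereas the maps in the statement are (reading the evident typo in the first one as $p^{r,+}_\alpha$ on the right) the pairs $(p^{l,+}_\alpha,p^{r,+}_\alpha)$ and $(p^{l,-}_\alpha,p^{r,-}_\alpha)$ --- precisely the two combinations \emph{not} covered by Lemma~\ref{lem.psicannot}. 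Indeed, in the proof of Theorem~\ref{thm.main2} the four root-pairs at level $A>\bar A$ are split: $\hat\psi_A$ and $\check\psi_A$ are excluded by Lemma~\ref{lem.psicannot}, and the present lemma is responsible for the other two. So citing Lemma~\ref{lem.psicannot} here is circular with respect to the overall scheme.

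The paper handles these two cases by a direct comparison argument. For $\tilde\rho:=p^{l,+}_\alpha{\bf 1}_{x<0}+p^{r,+}_\alpha{\bf 1}_{x>0}$ one has $\tilde\rho\geq\hat\psi_{A_{\max}}$ (since $p^{l,+}_\alpha\geq p^{l,+}_{A_{\max}}$ and $p^{r,+}_\alpha\geq p^{r,-}_{A_{\max}}$ for $\alpha\leq A_{\max}$); if $\tilde\rho$ were stationary, comparison would give $p^{l,+}_\alpha=S(1,\tilde\rho)(0^-)\geq S(1,\hat\psi_{A_{\max}})(0^-)=p^{l,+}_{\bar A}$, contradicting $\alpha>\bar A$. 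The second map $(p^{l,-}_\alpha,p^{r,-}_\alpha)$ is handled symmetrically: it lies below $\hat\psi_{A_{\max}}$, and one compares traces at $0^+$.
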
 

\begin{proof} By Lemma \ref{lem.HJvsCL}, we know that, if we set $\rho= S(\hat \psi_{A_{\max}})$, then there exists $u_0\in Lip$ such that $\rho=\partial_x u$, where $u$ is the  unique viscosity solution to \eqref{eq.HJbis} satisfying the boundary condition \eqref{eq.boundarycond} with  $A$ replaced by $\bar A$. One easily checks, as Step~3 in the proof of Theorem \ref{thm.main1}, that 
$$
u(t,x) = \max \{ u_0(x) -A_{\max} t , \hat \phi_{\bar A}(x)-\bar A t\}.
$$ 
As $u(t,x)= \hat \phi_{\bar A}(x)-\bar A t$ in a neighborhood of $x=0$, we infer that 
$$
(\rho(t,0^-), \rho(t,0^+))= (\partial_x\hat \phi_{\bar A}(0^-), \partial_x\hat \phi_{\bar A}(0^+))=(p^{l,+}_{\bar A},p^{r,-}_{\bar A}).
$$
Moreover Lemma \ref{lem.statsol} states that $\hat \psi_{\bar A}$ is a  stationary solution for $S$. 

Let now $\alpha\in (\bar A, A_{\max}]$. Assume that the map $\tilde \rho(x):=  p^{l,+}_\alpha{\bf 1}_{x<0}+ p^{l,+}_\alpha{\bf 1}_{x>0}$  is a stationary solution for $S$. As $\tilde \rho \geq \hat \psi_{A_{\max}}$, we infer by comparison that $S(t, \tilde \rho)(x)\geq S(t,\hat \psi_{A_{\max}})(x)=\rho(t,x)$ a.e.. Thus $S(1, \tilde \rho)(0^-)\geq p^{l,+}_{\bar A}$. But, as by assumption $\tilde \rho$ is a stationary solution, we have  $p^{l,+}_\alpha = S(1, \tilde \rho)(0^-)\geq p^{l,+}_{\bar A}$, which contradicts the assumption $\alpha>\bar A$. In the same way, one easily checks that the map 
$x\to p^{l,-}_\alpha{\bf 1}_{x<0}+ p^{r,-}_\alpha{\bf 1}_{x>0}$  cannot be a stationary solution for $S$ if $\alpha\in (\bar A, A_{\max}]$.
\end{proof}

\begin{proof}[Proof of Theorem \ref{thm.main2}] Recall that $\bar A= H^l(q^-)=H^r(q^+)$ where $q^\pm=S(1,\hat \psi_0)(0^\pm)$. Let us prove that $S= S^{\bar A}$. Fix $\rho_0\in L^\infty$ and $\rho(t,x)= S(t, \rho_0)(x)$. We know that $\rho$ is an entropy solution to the CL in $\R_+\times (\R\backslash\{0\})$ and that $\rho$ satisfies the initial condition. Let us check that $(\rho(\bar t,0^-),\rho(\bar t,0^+))\in \mathcal G_{\bar A}$ a.e. 

Indeed, for a.e.  $\bar t>0$,  the trace $(q^-,q^+)= (\rho(\bar t,0^-),\rho(\bar t,0^+))$ exists and the map $s:(t,x)\to q^-{\bf 1}_{\{x<0\}}+q^+{\bf 1}_{\{x>0\}}$ is a stationary solution for $S$ (Lemma \ref{lem.statsol}).  
Let us assume for a while that $(q^-,q^+)\notin \mathcal G_{\bar A}$. By Lemma \ref{lem.RH}, we have $H^l(q^-)=H^r(q^+)$. Let $A= H^l(q^-)=H^r(q^+)$.  If $A>\bar A$, then either $s(x)=p^{l,+}_\alpha{\bf 1}_{x<0}+ p^{l,+}_\alpha{\bf 1}_{x>0}$, or $s(x)= p^{l,-}_\alpha{\bf 1}_{x<0}+ p^{r,-}_\alpha{\bf 1}_{x>0}$, or  $s=\hat \psi_A$, or $s= \check \psi_A$. The first two cases are excluded by Lemma \ref{lem.pcannot} while the last two ones are excluded by Lemma \ref{lem.psicannot}. Thus $A\leq \bar A$. As $(q^-,q^+)\notin \mathcal G_{\bar A}$, we necessarily have $A<\bar A$, $q^-=p^{l,+}_A$ and $q^+=p^{r,-}_A$. Thus $s=\hat \psi_A$. But Lemma \ref{lem.psicannot} excludes also this case. Hence $(q^-,q^+)\in \mathcal G_{\bar A}$, which  concludes the proof. 
\end{proof}

\paragraph{\textbf{Acknowledgement.}}
This research was partially funded by l'Agence Nationale de la Recherche (ANR), project ANR-22-CE40-0010 COSS. 
For the purpose of open access, the authors have applied a CC-BY public copyright licence to any Author Accepted Manuscript (AAM) version arising from this submission. The author thanks Boris Andreianov and R\'egis Monneau for fruitful discussions.

\end{document}